\documentclass[a4paper,12pt]{article}

\usepackage{xcolor}

\usepackage[margin=3cm,footskip=1cm]{geometry}

\usepackage[T1]{fontenc}

\usepackage{amsmath,amssymb,amsthm,bm,mathtools,xspace,booktabs,url}
\usepackage{graphicx,etoolbox}

\usepackage[round]{natbib}

\AtBeginEnvironment{thebibliography}{%
  \small
  \setlength\itemsep{0.75em plus 0.5em minus 0.75em}%
}

\usepackage{caption}
\captionsetup{
  tableposition=top, 
  font=small,
  labelfont=bf}

\usepackage[raggedright]{titlesec}

\numberwithin{equation}{section} 

\theoremstyle{plain} 
\newtheorem{theorem}{Theorem}[section]
\newtheorem{lemma}[theorem]{Lemma}
\newtheorem{proposition}[theorem]{Proposition}
\newtheorem{definition}[theorem]{Definition}


\theoremstyle{definition} 

\usepackage{authblk}

\makeatletter
\newcommand\CorrespondingAuthor[1]{%
  \begingroup%
  \def\@makefnmark{}%
  \footnotetext{Corresponding author: #1}%
  \endgroup%
}
\makeatother

%
%
\makeatletter
\renewenvironment{abstract}{%
  \small%
  \providecommand\keywords{%
    \par\medskip\noindent\textit{Keywords:}\xspace}%
  \begin{center}%
    \bfseries \abstractname\vspace{-.5em}\vspace{\z@}%
  \end{center}%
  \quote%
}{\endquote}
\makeatother

%
%
\usepackage[above,below,section]{placeins}

\frenchspacing

\usepackage{siunitx}

\usepackage[all]{onlyamsmath}

\pagestyle{plain} 

\usepackage{lipsum}

\usepackage[utf8]{inputenc}
\usepackage{float,multicol,eso-pic,rotating,bbm}

\newtheorem{corollary}[theorem]{Corollary}

\newcommand{\R}{\mathbb{R}}

\newcommand{\N}{\mathbb{N}}
\newcommand{\Z}{\mathbb{Z}}


\newcommand{\Me}{\mathrm{Me}}

\newcommand{\Mee}{\widehat{\mathrm{Me}}}

\newcommand{\Fe}{\widehat{F}}

\newcommand{\dd}{\mathrm d}
\newcommand{\1}{\mathbf 1}


\newcommand{\bX}{\mathbf{X}}
\newcommand{\bY}{\mathbf{Y}}
\newcommand{\bx}{\mathbf{x}}
\newcommand{\by}{\mathbf{y}}

\newcommand{\bZ}{\mathbf{Z}}

\renewcommand{\P}{\mathrm{P}}

\DeclareMathOperator\E{E}

\DeclareMathOperator\Var{Var}
\DeclareMathOperator\Cov{Cov}

\let\oldtextbf\textbf
\renewcommand\textbf[1]{\oldtextbf{\boldmath #1}}

\begin{document}
\title{Median-based estimation of the intensity of a spatial point process}
\author[1]{Jean-François Coeurjolly}
\affil[1]{Univ. Grenoble Alpes, F-38000 Grenoble, France\\
\texttt{Jean-Francois.Coeurjolly@upmf-grenoble.fr}.}

\date{}

\maketitle

\begin{abstract}
  This paper is concerned with a robust estimator of the intensity of a stationary spatial point process. The estimator corresponds to the median of a jittered sample of the number of points, computed from a tessellation of the observation domain. We show that  this median-based estimator satisfies a Bahadur representation from which we deduce its consistency and asymptotic normality under mild assumptions on the spatial point process. Through a simulation study, we compare the new estimator, in particular, with the standard one counting the mean number of points per unit volume. The empirical study confirms the asymptotic properties established in the theoretical part and shows that the median-based estimator is more robust to outliers than  standard procedures.

  \keywords Cox processes; Robust statistics; Sample quantiles; Bahadur representation.
\end{abstract}

\section{Introduction}

Spatial point patterns are datasets containing the random locations of some event of interest. These datasets arise in many scientific fields such as biology, epidemiology, seismology, hydrology. Spatial point processes are the stochastic models generating such data. We refer to  \citet{stoyan:kendall:mecke:95}, \cite{illian:et:al:08} or \citet{moeller:waagepetersen:04} for an overview on spatial point processes. These references cover practical as well as theoretical aspects. A point process $\bX$ in $\R^d$ is a locally finite random subset of $\R^d$ meaning that the restriction to any bounded Borel set is finite. The point process $\bX$ takes values in $\Omega$, which contains in  all locally finite subsets of $\R^d$. Thus, the distribution of $\bX$ is a probability measure  on an appropriate $\sigma$-algebra  of $\Omega$. The Poisson point process is the reference process to model random locations of points without  interaction. Many alternative models such as Cox point processes, determinantal point processes, Gibbs point processes allow us to introduce clustering effects or to produce regular patterns (see again e.g. \citet{moeller:waagepetersen:04} and \citet{lavancier:moller:rubak:14} for an overview). If the distribution of $\bX$ is invariant by translation, we say that $\bX$ is stationary. We are interested in this paper in first-order characteristics of $\bX$, which under the assumption of stationarity, reduce to a single real parameter denoted by $\lambda$. This intensity $\lambda$ measures the average number of points per unit volume.

Estimating $\lambda$ is  a well-known problem and has been the subject of a large literature. Based on a single realization of the point process $\bX$ in a bounded domain $W$ of $\R^d$, the natural way of estimating $\lambda$ is to compute the average number of points observed by unit volume, i.e. to evaluate $N(W) /|W|$ where $N(W)$ denotes the number of points of $\bX$ falling into the observation domain $W$ with volume $|W|$. We denote this estimator by $\widehat\lambda^{\mathrm{std}}$ for this estimator. If the point process is a homogeneous Poisson point process, $\widehat\lambda^{\mathrm{std}}$ is also the maximum likelihood estimator. Asymptotic properties of $\widehat\lambda^{\mathrm{std}}$ are  well established for a large class of models. In particular, as the observation window expands to $\R^d$, it can be shown  under mild assumptions on $\bX$ (mainly mixing conditions) that $\widehat\lambda^{\mathrm{std}}$ is consistent and satisfies a central limit theorem with asymptotic variance, which can be consistently estimated (see \cite{heinrich:prokevsova:10} and the references therein for more details). 
In some applications, it may be too time-consuming to count all points. In such situations, distance based methods, where mainly nearest distances between points are used, have been developed (see e.g. \citet{byth:82,diggle:03,magnussen:12}). Unlike the estimator $\widehat\lambda^{\mathrm{std}}$, those methods are quite sensitive to the model, which  explains why the only practically applicable  case  is the Poisson process \citep{illian:et:al:08}. 
Other moment-based methods include the adapted estimator proposed by~\citet{mrkvivcka:molchanov:05} or the recent Stein estimator (in the Poisson case) proposed by~\cite{clausel:coeurjolly:lelong:15}.

As outlined in particular in the book  \citet{illian:et:al:08}, an important step in the statistical analysis of point patterns is the search for unusual points or unusual point configurations, i.e. the search of outliers. Two kinds of outliers appear when dealing with point pattern: first points may appear at locations where they are not expected. This situation  could appear for instance when two species of plants or trees cannot be distinguished at the time of data collection. Second, it is possible that there are  missing points in the pattern, i.e. areas of the observation domain where, according to the general structure of the pattern, points are expected. \citet{illian:et:al:08} or \citet{baddeley:etal:05} have proposed several diagnostic tools to detect outliers and more generally to judge the quality of fit of a model.  To the best of our knowledge, the works by \citet{berndt:stoyan:97} and \citet{assuncao:guttorp:99} are the only works where robustness of estimation procedures are tackled. \citet{assuncao:guttorp:99} developed an M-estimator to estimate the intensity of an inhomogeneous Poisson point process. For an application in materials science, \citet{berndt:stoyan:97} proposed the following methodology to estimate the intensity parameter of a stationary point process: let $\mathcal C$ be a typical cell of the Voronoi tessellation built from a stationary point process. It is known, see e.g. \cite{stoyan:kendall:mecke:95,moller:94}, that $\E |\mathcal C|= 1/\lambda$, whereby an estimator of $\lambda$ can be deduced by evaluating the sample mean of cell areas produced by the Voronoi tessellation of the observed point pattern. \citet{berndt:stoyan:97} proposed to replace the sample mean by a more robust estimator like a sample median or a trimmed-mean. \citet[p.~252]{illian:et:al:08} have suggested a slightly different procedure. Let $G=\{g_1,\dots,g_{\#G}\}$ be a grid of $\#G$ dummy points and let $a(g,\bX)$ be the cell area of the closest point $g\in G$ in $\bX$. Starting from the fact that $(\#G)^{-1}\sum_{g\in G} a(g,\bX)^{-1}$ is an unbiased estimator of $\lambda$ (up to edge effects), then a robust estimator can be constructed by replacing the sample mean by a trimmed-mean for instance (see Section~\ref{sec:sim} for more details). The two latter procedures described, which were not supported by theoretical results, are  the closest to the present work and we discuss them in Section~\ref{sec:sim}. 

As far as we know, no model free robust techniques supported by theoretical results have been developed. In this paper, we aim at developing a simple median-based estimator of $\lambda$. It is not so straightforward to see what a median means for a spatial point process but we may remark that if $W$ is decomposed as a union of $K$ non-overlapping cells $C_k$, then $N(W) = \sum_k N( C_k)$, which yields that $\widehat \lambda^{\mathrm{std}}$ can be actually rewritten as the empirical mean of the normalized counts variables $N( C_k)/|C_k|$. We have the cornerstone to define a more robust estimator by simply replacing the sample mean by the sample median.

The classical definition of sample quantiles and their asymptotic properties for continuous distributions are nowadays well-known,  see e.g. \citet{david:nagaraja:03}. In particular, sample medians in the i.i.d. setting, computed from an absolutely continuous distribution, $f$, positive at the true median, $\Me$, are consistent and satisfy a central limit theorem with asymptotic variance $1/4f(\Me)^2$. Such a result obviously fails for discrete distributions. In this paper, we follow a strategy introduced by~\citet{stevens:50} and applied to count data by~\citet{machado:silva:05} which consists in artificially imposing smoothness in the problem through jittering: i.e. we add to each count variable $N( C_k)$ a random variable $U_k$ following a uniform distribution on $(0,1)$. Now, the random variable $N( C_k)+U_k$ admits  a density and asymptotic results can be expected. To get around the  problem of large sample behavior for discrete distributions, another approach could be to consider the median based on the mid-distribution, see~\cite{ma:genton:parzen:11}. The authors prove that such sample quantiles behave more favourably than the classical one and satisfy, in the i.i.d. setting, a central limit theorem even if the distribution is discrete. We leave to a future work the question of deriving asymptotic properties for the sample median based on the mid-distribution in the context of this paper.

The rest of the paper is organized as follows. Section~\ref{sec:background} gives a short background on spatial point processes. General notation as well as the definition of our estimator are presented in Section~\ref{sec:estimators}. We also examine in Section~\ref{sec:estimators} how far the true median of $N(C_k)+U_k$ is from the intensity $\lambda|C_k|$. Section~\ref{sec:asymp} contains our main asymptotic results. General assumptions are discussed and a particular focus on Cox point processes is investigated. The main difficulty here is to establish a Bahadur representation for the jittered sample median, which can be applied to a large class of  models.  Section~\ref{sec:sim} presents the results of a  simulation study where we compare our procedure with the standard estimator $\widehat \lambda^{\mathrm{std}}$ and the estimator proposed by \citet{berndt:stoyan:97}. The research contained in this paper leads to a number of  interesting open questions, which are mentioned in Section~\ref{sec:conclusion}. Proofs of the results and additional  comments are postponed to Appendices~\ref{sec:proofs} and~\ref{app:supplementary}

\section{Background on spatial point processes} \label{sec:background}


Let $\bX$ be a spatial point process defined on $\R^d$, which
we see as a random locally finite subset of $\R^d$. Let $W$ be a bounded Borel set of $\R^d$, then the number of points in $\bX\cap W$, denoted by $N(W)$, is finite, and a realization of $\bX\cap W$ is of the form
$\bx=\{x_1,\dots,x_m\}\subset W$ for some nonnegative finite integer $m$. If $m=0$, then $\bx=\emptyset$ is an empty point pattern in $W$. For $u\in \R^d$, we denote by $\|u\|$ its Euclidean norm. For further background and measure theory on spatial point processes, see e.g.\ \cite{daley:vere-jones:03} and
\cite{moeller:waagepetersen:04}. We assume that $\bX$ is a stationary point process with intensity  $\lambda$, which, by Campbell's theorem (see e.g.\ \cite{moeller:waagepetersen:04}), is characterized by the fact that for any real Borel function $h$ defined on $\R^d$ and absolutely
integrable (with respect to the Lebesgue measure on
$\R^d$)
\begin{equation}\label{e:campbell}
\E\sum_{u\in\bX}h(u)= \lambda\int h(u)\dd u.
\end{equation}
Furthermore, for any integer $l\ge 1$, $\bX$ is said to have an $l$th-order product density
$\rho_l$ if $\rho_l$ is a non-negative Borel function on $\R^{dl}$
  such that for all non-negative Borel functions $h$ defined on $\R^{dl}$,
\begin{equation}\label{e:campbell2}
\E
\mathop{\sum\nolimits\sp{\ne}}_{u_1,\dots,u_l\in\bX} 
h(u_1,\dots,u_l) =\int_{\R^d}\cdots\int_{\R^d}
h(u_1,\dots,u_l)\rho_l(u_1,\dots,u_l)
\,\mathrm du_1\cdots\,\mathrm du_l,
\end{equation}
 where
the sign $\not=$ over the summation means that $u_1,\dots,u_l$ are
pairwise distinct. Note that $\lambda=\rho_1$ and that for the homogeneous Poisson point process $\rho_l(u_1,\dots,u_l)=\lambda^l$. If $\rho^{(2)}$ exists, then by the stationarity of $\bX$, $\rho^{(2)}(u,v)$ depends only on $u-v$. In that case, we define the pair correlation function $g$ as a function from $\R^d$ to $\R^+$ by $g(u-v)= \rho^{(2)}(u,v)/\lambda^2$. 

In this paper, we sometimes pay attention to Cox point processes, which are defined as follows.
\begin{definition}\label{def:cox} Let $(\xi(s),s\in \R^d)$ be a non-negative locally integrable random field. Then, $\bX$ is a Cox point process if the distribution of $\bX$ given $\xi$ is an inhomogeneous Poisson point process with intensity function $\xi$. If $\xi$ is stationary, so is $\bX$ and $\lambda = \E(\xi(s))$ for any $s$.
\end{definition}

Among often used models of stationary Cox point processes, we can cite
\begin{itemize}
\item Log-Gaussian Cox processes (e.g. \cite{moeller:waagepetersen:04}): 
Let $Y$ be a stationary Gaussian process on $\R^d$
  with mean  $\mu$ and
  stationary covariance function 
$c(u)=\sigma^2r(u)$, $u\in\R^d$, where
  $\sigma^2>0$ is the variance and $r$ the
  correlation function. If 
$\bX$ conditional on $Y$ is a Poisson point process with intensity
function $\xi=\exp(Y)$, then $\bX$ is a (homogeneous)
log-Gaussian Cox process. One example of correlation function is the Matérn correlation function (which includes the exponential correlation function)  given by $r(u)=(\sqrt{2\nu} \|u\|/\phi)^\nu K_\nu(\sqrt{2\nu} \|u\|/\phi)/(2^{\nu-1}\Gamma(\nu))$ where $\Gamma$ is the gamma function, $K_\nu$ is the modified Bessel function of the second kind, and $\phi$ and $\nu$ are non-negative parameters.
In particular, the intensity of $\bX$ equals $\lambda=e^{\mu+\sigma^2/2}$.
\item Neyman-Scott processes
  (e.g. \cite{moeller:waagepetersen:04}): Let  
$\mathbf C$ be a stationary Poisson point process with intensity $\kappa>0$, 
and $f_\sigma$ a
density function on $\R^d$. If 
$\bX$ conditional on $\mathbf C$ is a Poisson point process with intensity  
\begin{equation}\label{e:inhomSNCP}
\alpha \sum_{c\in\mathbf C}f_\sigma(u-c)/\kappa,\quad u\in\R^2,
\end{equation}
for some $\alpha>0$, then $\bX$ is a (homogeneous) Neyman-Scott process.
When $f_\sigma$ corresponds to the density of a uniform distribution on $B(0,\sigma^2)$ (resp. a Gaussian random variable with mean 0 and variance $\sigma^2$), we refer to $\bX$ as the (homogeneous) Matérn Cluser (resp. Thomas) point process. In particular, the intensity of $\bX$ equals $\lambda=\alpha \kappa$.
\end{itemize}

\section{Median-based estimator of $\lambda$} \label{sec:estimators}


For any real-valued random variable $Y$, we denote by $F_Y(\cdot)$ its cdf, by $F_Y^{-1}(p)$ its quantile of order $p\in (0,1)$, by $\Me_Y=F_Y^{-1}(1/2)$ its theoretical median. Based on a sample $\mathbf{Y}=(Y_1,\dots,Y_n)$ of $n$ identically distributed random variables we denote by $\Fe(\cdot;\mathbf Y)$ the empirical cdf, by $\Fe^{-1}(p;\mathbf Y)$ the sample quantile of order $p$ given by
\begin{equation} \label{def:quantile}
  \Fe^{-1}(p;\mathbf Y) = \inf \{ x\in \R: p \leq \Fe(x;\bY)\}.
\end{equation}
The sample median is simply denoted by $\Mee(\mathbf Y)=\Fe^{-1}(1/2;\mathbf Y)$.

We will study the large-sample behavior of estimators of the intensity $\lambda$. Specifically, we consider a region $W_n$ assumed to increase to $\R^d$ as $n\to \infty$. We assume that the domain of observation $W_n$ can be decomposed as $W_n= \cup_{k\in \mathcal K_n} C_{n,k}$ where the cells $C_{n,k}$ are non-overlapping and equally sized with  volume $c_n=|C_{n,k}|$ and where $\mathcal K_n$ is a subset of $\Z^d$ with cardinality $k_n=|\mathcal K_n|$. More details on $W_n, c_n$ and $k_n$ will be provided in the appropriate Section~\ref{sec:asymp} when we present asymptotic results. Finally, for any random variable $Y$ or any random vector $\bY$, we denote by $\check Y=Y/c_n$ and $\check \bY = \bY/c_n$. 

The classical estimator of the intensity $\lambda$ is given by $\widehat \lambda^{\mathrm{std}} = N(W_n)/|W_n|$. In order to define a more robust estimator, we can note that
\begin{equation}\label{eq:mean}
  \widehat\lambda^{\mathrm{std}} = \frac1{k_n} \; \sum_{k\in \mathcal K_n} \frac{N(C_{n,k})}{c_n} 
\end{equation}
since $|W_n|=k_n c_n$, i.e. $\widehat \lambda^{\mathrm{std}}$ is nothing else that the sample mean of intensity estimators computed in cells $C_{n,k}$. The strategy adopted in this paper is to replace the sample mean by the sample median, which is known to be more robust to outliers. As underlined in the introduction, quantile estimators based on count data or more generally on discrete data can cause some troubles in the asymptotic theory. The problems come from the fact that,  in the continuous case, the asymptotic variance of the sample median involves the probability distribution function at the true median. 

To overcome the problem of discontinuity of the counts variables $N(C_{n,k})$, we follow a well-known technique (e.g. \citet{machado:silva:05}) which introduces smoothness in the problem. Let $(U_k, k\in \mathcal K_n)$ be a collection of independent and identically distributed  random variables, distributed as $U\sim \mathcal U([0,1])$. Then, for any $k\in \mathcal K_n$, we define 
\begin{equation}\label{eq:Znk}
  Z_{n,k}  = N(C_{n,k}) + U_k \quad \mbox{ and } \quad 
  \bZ=(Z_{n,k}, \; k\in \mathcal K_n).
\end{equation}
Since $\bX$ is stationary, the variables $Z_{n,k}$ are identically distributed and we let $Z\sim Z_{n,k}$. 
The jittering effect shows up right away: the cdf of $Z$ is given for any $t\geq 0$ by
\[
  F_{Z}(t) = P( N(C_{n,0})\leq  \lfloor t \rfloor -1) + P(N(C_{n,0})=\lfloor t\rfloor ) \,(t-\lfloor t \rfloor),
\]
which is a continuously differentiable function whereby we deduce that $Z$ admits a density $f_Z$ at $t$ given by
\begin{equation}\label{eq:fZ}
  f_Z(t) = P(N(C_{n,0})=\lfloor t\rfloor ).
\end{equation}
We  define the jittered estimator of the intensity $\lambda$ by
\begin{equation} \label{eq:lambdaJ}
  \widehat \lambda^J = \Mee ( \check \bZ) = \frac{\Mee (\bZ)}{c_n}
\end{equation}
where the sample median is defined by~\eqref{def:quantile}.
Since it is expected that $\Mee(\check \bZ)$ is close to $\Me_{\check Z}= \Me_Z/c_n$, we need to understand how far $\Me_{\check Z}$ is from $\lambda$. Using the definition of the median we can prove the following result.

\begin{proposition} \label{prop:MeZ}
  Assume that the pair correlation function of the stationary point process $\bX$ exists for $u,v\in \R^d$ and satisfies  $\int_{\R^d} |g(w)-1|\dd w<\infty$, then for any $\varepsilon>0$ we have for $n$ sufficiently large
  \begin{equation}\label{eq:MeZcheck}
    | \Me_{\check Z} - \lambda | \leq \frac 1{c_n}\left(\frac12+\sqrt{\frac1{12}}\right) + (1+\varepsilon)\sqrt{ \frac{\sigma}{c_n}} = \mathcal O (c_n^{-1/2})
  \end{equation}
  where $\sigma^2= \lambda+\lambda^2\int_{\R^d}(g(w)-1)\dd w$.
  \end{proposition}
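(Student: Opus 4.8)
The plan is to transfer the statement to the single jittered count $Z = N(C_{n,0}) + U$ and to exploit the elementary fact that the median and the mean of a square-integrable random variable cannot lie far apart. Since quantiles commute with multiplication by $c_n > 0$ we have $\Me_{\check Z} = \Me_Z / c_n$, and by Campbell's formula~\eqref{e:campbell} together with stationarity, $\E Z = \E N(C_{n,0}) + \E U = \lambda c_n + 1/2$. So it suffices to control $|\Me_Z - \lambda c_n|$, which I would split as
\[
  |\Me_Z - \lambda c_n| \;\le\; |\Me_Z - \E Z| + |\E Z - \lambda c_n| \;=\; |\Me_Z - \E Z| + \tfrac12 .
\]

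For the first term I would invoke the classical bound $|\Me_Y - \E Y| \le \sqrt{\Var Y}$, valid for any $Y$ with $\E Y^2 < \infty$: writing $|\E Y - \Me_Y| = |\E(Y - \Me_Y)| \le \E|Y - \Me_Y| \le \E|Y - \E Y| \le \sqrt{\Var Y}$, where the second inequality holds because the median minimises $a \mapsto \E|Y - a|$ and the third is Cauchy--Schwarz. That $Z \in L^2$ follows from the hypothesis on $g$: by~\eqref{e:campbell2} with $l = 2$, $\E[N(C_{n,0})(N(C_{n,0}) - 1)] = \lambda^2 \int_{C_{n,0}}\int_{C_{n,0}} g(u - v)\,\dd u\,\dd v$, which is finite because $w \mapsto |C_{n,0} \cap (C_{n,0} - w)|$ is bounded with compact support and $g$ is locally integrable (since $\int|g - 1| < \infty$). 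As $U$ is independent of $N(C_{n,0})$, $\Var Z = \Var N(C_{n,0}) + 1/12$; using $\sqrt{a + b} \le \sqrt a + \sqrt b$ and dividing by $c_n$ then produces exactly the deterministic term $\tfrac1{c_n}(\tfrac12 + \sqrt{1/12})$ together with a remaining term $\sqrt{\Var N(C_{n,0})}/c_n$.

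The only step needing a genuine argument is to show $\Var N(C_{n,0})/c_n \to \sigma^2$. Combining~\eqref{e:campbell} and~\eqref{e:campbell2} gives $\Var N(C_{n,0}) = \lambda c_n + \lambda^2 \int_{C_{n,0}}\int_{C_{n,0}} (g(u - v) - 1)\,\dd u\,\dd v$, and the substitution $w = u - v$ rewrites the double integral as $\int_{\R^d} (g(w) - 1)\,|C_{n,0} \cap (C_{n,0} - w)|\,\dd w$, so that
\[
  \frac{\Var N(C_{n,0})}{c_n} \;=\; \lambda + \lambda^2 \int_{\R^d} (g(w) - 1)\,\frac{|C_{n,0} \cap (C_{n,0} - w)|}{c_n}\,\dd w .
\]
Since $0 \le |C_{n,0} \cap (C_{n,0} - w)|/c_n \le 1$ and this ratio converges to $1$ for every fixed $w$ as the cells expand (with $c_n \to \infty$, under the regularity of $W_n$ and of the $C_{n,k}$ detailed in Section~\ref{sec:asymp}), dominated convergence with the integrable majorant $|g - 1|$ yields $\Var N(C_{n,0})/c_n \to \lambda + \lambda^2 \int_{\R^d}(g(w) - 1)\,\dd w = \sigma^2$. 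Hence for any $\varepsilon > 0$ and all large $n$, $\Var N(C_{n,0}) \le (1 + \varepsilon)^2 \sigma^2 c_n$, so $\sqrt{\Var N(C_{n,0})}/c_n \le (1 + \varepsilon)\sqrt{\sigma^2/c_n}$; adding the three contributions gives~\eqref{eq:MeZcheck}, whose right-hand side is $\mathcal O(c_n^{-1/2})$.

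The main (and essentially only) obstacle is the limit $\Var N(C_{n,0})/c_n \to \sigma^2$: one must check that the cell geometry is regular enough (a Van Hove type expanding sequence, e.g. scaled copies of a fixed bounded convex set) for the pointwise limit $|C_{n,0} \cap (C_{n,0} - w)|/c_n \to 1$ to hold, and it is precisely the assumption $\int|g - 1| < \infty$ that supplies the dominating function; everything else is bookkeeping with elementary inequalities.
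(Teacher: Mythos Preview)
Your proof is correct and follows essentially the same route as the paper: both use the elementary bound $|\Me_Y-\E Y|\le\sqrt{\Var Y}$, the identity $\E Z=\lambda c_n+1/2$, the decomposition $\Var Z=\Var N(C_{n,0})+1/12$, and the convergence $c_n^{-1}\Var N(C_{n,0})\to\sigma^2$. You are in fact more thorough than the paper, which simply cites the variance limit as a known remark, whereas you spell out the covariogram computation and the dominated-convergence step; you also supply a one-line justification of the median--mean inequality that the paper takes for granted.
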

 The assumption $\int|g(w)-1|\dd w<\infty$ is quite standard when we deal with asymptotics for spatial point processes, see e.g. \cite{guan:loh:07} or \cite{heinrich:prokevsova:10}. It ensures that for any sequence of regular domains $\Delta_n$, $|\Delta_n|^{-1} \Var(N(\Delta_n)) \to \sigma^2$ as $n\to \infty$. We refer the reader to these papers and to Section~\ref{sec:assumptions} for a discussion on this assumption.
\begin{proof}
  By the previous remark, ${c_n}^{-1}\Var( N(C_{n,k}))  \to \sigma^2$ as $n\to \infty$. Since for any continuous random variable $Y$ with finite first two moments, $|\Me_Y-\E(Y )|\leq \sqrt{\Var(Y)}$ and since $\E(Z)=\lambda c_n+1/2$, then for any $\varepsilon>0$ we have for $n$ sufficiently large
  \[
    |\Me_Z-\lambda c_n| \leq \frac12  + \sqrt{\frac{1}{12} + (1+\varepsilon)^2 \sigma^2 c_n}.
  \]
Dividing both sides of the inequality by $c_n$ leads to~\eqref{eq:MeZcheck}.
  \end{proof}
Let $\Pi\sim \mathcal P(\nu)$ be a Poisson random variable with mean $\nu>0$. 
Several results are known for the theoretical median of $\Pi$, see e.g. \cite{adell:jodra:05}. 
For instance, when $\nu$ is an integer $\Me_{\Pi} = \nu$ and for non integer $\nu$,  $-\log 2 \leq \Me_{\Pi} \leq 1/3$ (see Figure \ref{fig:medianPoisson}). Based on this, we can obtain a sharper inequality than~\eqref{eq:MeZcheck} for Poisson and Cox point processes.
\begin{proposition}\label{prop:MeZcheckCox}
Let $\bX$ be a stationary Cox point process with latent random field $\xi$, then
\begin{equation}
\label{eq:MeZcheckCox}
\lambda c_n -\log 2 \leq \Me_{N(C_{n,0})} \leq \lambda c_n +\frac13 
\quad \mbox{ and } \quad
|\Me_{ Z} - \lambda c_n| \leq \frac4{3}.
\end{equation}
\end{proposition}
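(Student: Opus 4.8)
The plan is to condition on the latent field and reduce everything to the Poisson case. Since $\bX$ is a Cox process, conditionally on $\xi$ the count $N(C_{n,0})$ is Poisson with parameter $\Lambda_n:=\int_{C_{n,0}}\xi(s)\,\dd s$, and by stationarity and Campbell's theorem~\eqref{e:campbell} we have $\E[\Lambda_n]=\E[N(C_{n,0})]=\lambda c_n$. I would then invoke the sharp two-sided bound on the Poisson median recalled after Proposition~\ref{prop:MeZ} (see \cite{adell:jodra:05}): $\nu-\log 2\le \Me_{\mathcal P(\nu)}\le \nu+1/3$ for every $\nu>0$. Because a Poisson median is an integer, the upper bound gives $\P(\mathcal P(\nu)\le k_0)\ge 1/2$ for all $\nu\le\lambda c_n$ (with $k_0:=\lfloor\lambda c_n+1/3\rfloor$), and the lower bound gives $\P(\mathcal P(\nu)\le k)<1/2$ for every integer $k<\nu-\log 2$; in particular both hold at the deterministic value $\nu=\lambda c_n$.

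The heart of the proof is to pass from these conditional facts to the unconditional median of $N(C_{n,0})$. Writing the cdf as a mixture, $\P(N(C_{n,0})\le k)=\E[\phi_k(\Lambda_n)]$ with $\phi_k(\nu):=\P(\mathcal P(\nu)\le k)$, it suffices to prove $\E[\phi_{k_0}(\Lambda_n)]\ge 1/2$ (which then gives $\Me_{N(C_{n,0})}\le k_0\le\lambda c_n+1/3$) and, symmetrically, $\E[\phi_k(\Lambda_n)]<1/2$ for every integer $k<\lambda c_n-\log 2$ (which gives $\Me_{N(C_{n,0})}\ge\lambda c_n-\log 2$). Here lies the main obstacle: $\phi_k''(\nu)$ is proportional to $\nu-k$, so $\phi_k$ is concave on $(0,k]$ and convex on $[k,\infty)$ — a decreasing, S-shaped function — and a plain Jensen inequality from $\phi_{k_0}(\lambda c_n)\ge 1/2$ is not available over the whole range of $\Lambda_n$. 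I see two ways forward. One is to exploit that $\lambda c_n$ lies within distance $1$ of the inflection point $k_0$: split $\E[\phi_{k_0}(\Lambda_n)]$ at $k_0$, bound the concave branch below by the tangent line at $\lambda c_n$ and the convex branch below by a suitable affine minorant, and recombine so that only $\E[\Lambda_n]=\lambda c_n$ enters. The other, probably cleaner, route is to import a mild dispersion control: under the hypothesis $\int_{\R^d}|g(w)-1|\,\dd w<\infty$ of Proposition~\ref{prop:MeZ} one has $c_n^{-1}\Var(\Lambda_n)=\mathcal O(1)$, so $\Lambda_n$ concentrates near $\lambda c_n$ and the contribution of $\{\,|\Lambda_n-\lambda c_n|\ \text{large}\,\}$ to $\E[\phi_k(\Lambda_n)]$ can be discarded via Chebyshev. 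Some such extra ingredient is genuinely necessary for the lower inequality: a two-atom latent field placing almost all the mass of $\Lambda_n$ at $0$ (with the compensating atom far away) yields $\Me_{N(C_{n,0})}=0$ while $\lambda c_n$ may exceed $\log 2$, so a bare mixed-Poisson argument cannot succeed.

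Granting the first bound, the second one is elementary and deterministic. From the expression of $F_Z$ recalled just before~\eqref{eq:fZ}, for every integer $m$ one has $F_Z(m)=\P(N(C_{n,0})\le m-1)$ and $F_Z(m+1)=\P(N(C_{n,0})\le m)$. Hence, if $\Me_{N(C_{n,0})}=m$, then $\P(N(C_{n,0})\le m-1)<1/2\le \P(N(C_{n,0})\le m)$, i.e.\ $F_Z(m)<1/2\le F_Z(m+1)$, and since $F_Z$ is continuous and nondecreasing this forces $m\le\Me_Z\le m+1$; that is, $\Me_{N(C_{n,0})}\le\Me_Z\le\Me_{N(C_{n,0})}+1$. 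Combining with the first inequality and $\log 2<4/3$ gives $\Me_Z\le\lambda c_n+\tfrac13+1=\lambda c_n+\tfrac43$ and $\Me_Z\ge\Me_{N(C_{n,0})}\ge\lambda c_n-\log 2\ge\lambda c_n-\tfrac43$, which is the claim $|\Me_Z-\lambda c_n|\le 4/3$.
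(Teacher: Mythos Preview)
Your derivation of the second inequality from the first is exactly the paper's: both use $\Me_{N(C_{n,0})}\le\Me_Z\le\Me_{N(C_{n,0})}+1$, which follows from $N(C_{n,0})\le Z\le N(C_{n,0})+1$.

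For the first inequality, the paper's argument is in fact much shorter than anything you attempt: it writes the Poisson bound $\Lambda_n-\log 2\le \Me_{N(C_{n,0})\mid\xi}\le \Lambda_n+1/3$ for the \emph{conditional} median and then ``deduces the first result by taking the expectation of each term''. That step identifies $\E\big[\Me_{N(C_{n,0})\mid\xi}\big]$ with $\Me_{N(C_{n,0})}$, and as you essentially point out, these two quantities need not coincide. Your two-atom counterexample is decisive: with $\xi(s)\equiv Y$ where $\P(Y=0)>1/2$ and $\P(Y=M)=1-\P(Y=0)$, the process is a stationary Cox process, $\P(N(C_{n,0})=0)>1/2$ so $\Me_{N(C_{n,0})}=0$, yet $\lambda c_n=(1-\P(Y=0))Mc_n$ can be arbitrarily large. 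A symmetric construction with $\P(Y=M)>1/2$ breaks the upper bound as well. So the first display of~\eqref{eq:MeZcheckCox} is not true for arbitrary stationary Cox processes, and the paper's proof glosses over precisely the difficulty you flag.

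That said, neither of your two proposed repairs closes the gap either. The convexity-splitting idea cannot succeed, because the counterexample above is insensitive to any analytic manipulation of $\phi_k$: no rearrangement of affine minorants will produce $\E[\phi_{k_0}(\Lambda_n)]\ge 1/2$ when the inequality is simply false. The concentration route via $\int_{\R^d}|g(w)-1|\,\dd w<\infty$ and Chebyshev is the right instinct, but it imports an extra hypothesis not present in the statement and, more importantly, it only controls $\Lambda_n$ up to fluctuations of order $\sqrt{c_n}$; this yields $\Me_{N(C_{n,0})}=\lambda c_n+\mathcal O(\sqrt{c_n})$, which is Proposition~\ref{prop:MeZ} again, not the $\mathcal O(1)$ bound claimed here. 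In short: you have correctly diagnosed that a ``bare mixed-Poisson argument cannot succeed'', and the paper's proof is exactly such a bare argument; the proposition, as stated, requires further assumptions on the latent field to hold.
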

A reformulation of \eqref{eq:MeZcheckCox} is of course $\Me_{\check Z}-\lambda =\mathcal O(c_n^{-1})$.

\begin{proof} Given $\xi$, for any $k\in \mathcal K_n$, $N(C_{n,k})$ follows a Poisson distribution with mean $\int_{C_{n,k}}\xi(s)\dd s$. Denote by $\Me_{N(C_{n,k})\mid \xi}$  the median of $N(C_{n,k})$ given $\xi$ defined by
\[
  \Me_{N(C_{n,k})\mid \xi} = \inf \left\{ z\in \R: 
F_{N(C_{n,k})\mid \xi} (z) \geq 1/2
  \right\}
\]
where $ F_{N(C_{n,k})\mid \xi}$ is the  cumulative distribution function of $N(C_{n,k})$ given $\xi$. From the property of the median of a Poisson distribution, we have for any $k\in \mathcal K_n$ 
\[
\int_{C_{n,k}} \xi(s) \dd s -\log 2 \leq \Me_{N_{n,k}\mid \xi} \leq \int_{C_{n,k}}\xi(s)\dd s +\frac13.
\] 
Since $\E \int_{C_{n,k}} \xi(s)\dd s=\lambda c_n$, the first result is deduced by 
taking the expectation of each term of the previous inequality. 
Since $N(C_{n,0}) \leq Z\leq N(C_{n,0}) +1$, $\Me_{N(C_{n,0})} \leq \Me_Z \leq \Me_{N(C_{n,0})}+1$ which leads to the second result.
\end{proof}

\section{Asymptotic results} \label{sec:asymp}

We state in this section our main results and the general assumptions required to obtain them. Proofs of the different results presented here, as well as auxiliary results, are presented in Appendix~\ref{sec:proofs}.

\subsection{General assumptions and discussion} \label{sec:assumptions}

We recall the classical definition of mixing coefficients
(see e.g.\ \cite{politis:98}): for $j,k \in \N\cup \{\infty\}$ and
$m\geq 1$, define 
\begin{align}
  \alpha_{j,k}(m)=\sup \{&  
|P(A\cap B) - P(A)P(B)|:\, A\in \mathcal{F}(\Lambda_1),\, B\in \mathcal{F}(\Lambda_2), \nonumber\\
& \Lambda_1\in \mathcal B(\R^d),\,\Lambda_2 \in \mathcal B(\R^d),\, |\Lambda_1|\leq j,\, |\Lambda_2|\leq k,\, d(\Lambda_1,\Lambda_2)\geq m
  \} \label{def:mixing}
\end{align}
where $\mathcal{F}(\Lambda_i)$ is the $\sigma$-algebra generated by
$\bX\cap \Lambda_i$, $i=1,2$, $d(\Lambda_1,\Lambda_2)$ is the
minimal distance between the sets $\Lambda_1$ and $\Lambda_2$, and
$\mathcal B(\R^d)$ denotes the class of Borel sets in~$\R^d$.

We require the following assumptions to prove our asymptotic results.\\

\noindent (i) For any $n\geq 1$, we assume that $W_n=\cup_{k\in \mathcal K_n}C_{n,k}$ where $\mathcal K_n$ is a subset of $\Z^d$ with cardinality
 $k_n=|\mathcal K_n|$ and where the cells $C_{n,k}$ are equally sized and non-overlapping cubes with volume $c_n$, defined by 
\[
  C_{n,k}=\left\{u=(u_1,\dots,u_d)^\top\in \R^d: c_n^{1/d}(k_l-1/2)\leq u_l\leq c_n^{1/d}(k_l+1/2), \;l=1\dots,d \right\}.
\]
We assume that $0\in \mathcal K_n$ and that there exists $0<\eta^\prime<\eta$ such that as $n\to \infty$ 
\[
  k_n\to \infty, \quad c_n\to \infty,  \quad \frac{k_n}{c_n^{\eta^\prime/2 \wedge (1-2\ell)}}\to 0
\]
where $\ell$ is given by Assumption (ii) and  $\eta$  by Assumption~(iv).
 
\noindent (ii) ${ }$\\
\hspace*{.5cm} \begin{minipage}{14cm}\begin{itemize}
  \item[(ii-1)] $\Me_Z-\lambda c_n =\mathcal O({c_n}^{\ell})$ with $0\leq\ell < 1/2$.
  \item[(ii-2)] $\forall t_n= \lambda c_n + \mathcal O(\sqrt{c_n/k_n})$,
  $
     {\P(N(C_{n,0})=\lfloor t_n \rfloor)}/{ \P(N(C_{n,0})=\lfloor \lambda c_n \rfloor)}  \to 1.
  $
  \item[(ii-3)] There exist $\underline\kappa,\overline\kappa>0$ such that for $n$ large enough, 
  $
    \underline\kappa \leq \sqrt{c_n} f_Z(\Me_Z) \leq \overline\kappa.
  $
\end{itemize} \end{minipage}

\noindent (iii) $\bX$ has a pair correlation function $g$ satisfying 
$
  \int_{\R^d} |g(w)-1| \dd w <\infty.
$\\
\noindent (iv) There exists $\eta >0$ such that
\[
  \alpha(m)= \sup_{p\geq 1} \frac{\alpha_{p,p}(m)}{p} = \mathcal O(m^{-d(1+\eta)}) \quad \mbox{ and } \quad
  \alpha_{2,\infty}(m) = \mathcal O(m^{-d(1+\eta)})
\]
where $\alpha_{j,k}(m)$ for $j,k\in \N\cup \{\infty\}$ is defined by~\eqref{def:mixing}.

Now, we discuss the different assumptions. The last statement of Assumption (i) is required to control the dependency between the variables $Z_{n,k}$ through the control of the mixing coefficients and to ensure that asymptotically  $|W_n|^{1/2}(\widehat\lambda^J-\lambda)$ behaves as $|W_n|^{1/2}(\widehat\lambda^J - \Me_{\check Z})$. We note that if $\bX$ is a stationary Cox point process, Proposition~\ref{prop:MeZcheckCox} yields that (ii-1) is satisfied for $\ell=0$. So if $\eta>2$, Assumption (i) can be rewritten as $c_n\to \infty$, $k_n\to \infty$ and $k_n /c_n\to 0$ as $n\to \infty$.  

Regarding Assumption~(ii), Proposition~\ref{prop:iiCox} stated below shows it can be simplified for a large class of Cox point processes.
We underline that Assumptions (i), (ii-1)-(ii-2)  imply the existence of $\overline\kappa<\infty$ such that $\sqrt{c_n} f_Z(\Me_Z)\leq \overline\kappa$, so (ii-3) could actually be simplified.

Assumption~(iii) is  classical when dealing with asymptotics of intensity  estimates, see e.g. \citet{heinrich:prokevsova:10}. For isotropic pair correlation functions, i.e. $g(w)=\overline g(\|w\|)$ for $\overline g:\R^+ \to\R$, Assumption (iii) is fulfilled when $\overline g(r)=0$ for $r\geq R$ or when $\overline g(r)=\mathcal O(r^{-d-\gamma})$ for some $\gamma>d$. This includes the Matérn cluster and Thomas processes and the log-Gaussian Cox process with Matérn-Whittle covariance functions. 

Assumption~(iv) is also quite standard and has been discussed a lot in the literature: \citet{guan:loh:07,guan:sherman:calvin:07,prokevsova:jensen:13} discussed the first part of~(iv) while the second one was commented in \cite{waagepetersen:guan:09,coeurjolly:moller:14}. Both of them are satisfied for the Matérn Cluster and Thomas processes and for log Gaussian Cox processes with correlation function decaying fast enough to zero. 

We point out that it is not so common to use both the mixing coefficients $\alpha(m)$ and $\alpha_{2,\infty}(m)$. As detailed in the proof of Theorem~\ref{thm:cltF}, the first one is used to control the dependence between the random variables $Z_{n,k}$ for $k\in \mathcal K_n$ and derive a central limit theorem using the blocking technique developed by \cite{ibragimov:linnik:71} which is pertinent and appropriate here since the cells $C_{n,k}$ are increasing. The second mixing coefficient is necessary to apply a multivariate central limit theorem inside the cell $C_{n,0}$. We prove in particular that $\P(N( {C_{n,0}})\leq \lambda c_n,N(C_{n,0}^-)\leq \lambda c_n )\to 1/2$ as $n\to \infty$ where $C_{n,0}^-$ is a "small" erosion of $C_{n,0}$ (see the proof of Step 1 of Theorem~\ref{thm:cltF} for more details).

The next result shows the simplifications we can obtain for Cox point processes.
\begin{proposition} \label{prop:iiCox}
  Let $\bX$ be a stationary Cox point process with latent random field $(\xi(s),s\in \R^d)$ satisfying the Assumptions (iii)-(iv). Assume there exists $\delta>2/\eta$, where $\eta$ is given by Assumption~(iv), such that $\E(|\xi(0)|^{2+\delta})<\infty$. Let $t_n=\lambda c_n +\mathcal O(\sqrt{c_n/k_n})$ and $T_n=\lfloor t_n \rfloor^{-1} \int_{C_{n,0}}\xi(s)\dd s$. We also assume that the sequence of  random variables $(B_n)_n$ defined by 
$
\log(B_n)={\lfloor t_n \rfloor \left( 
\log(T_n) -(T_n-1) +{(T_n-1)^2}/2 
\right) }
$ 
is uniformly integrable. Then, Assumption~(ii) holds (with $\ell=0$) and as $n\to \infty$
  \begin{equation}\label{eq:ii23Cox}
      \sqrt{c_n} \P( N(C_{n,0}) =\lfloor \lambda c_n \rfloor) \to \left( 2\pi \sigma^2\right)^{-1/2}
  \end{equation}  
where $\sigma^2 = \lambda + \lambda^2 \int_{\R^d} (g(w)-1)\dd w$.
\end{proposition}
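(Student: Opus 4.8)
The plan is to prove \eqref{eq:ii23Cox} first, since the local limit theorem it asserts will simultaneously deliver Assumptions (ii-2) and (ii-3) with $\ell=0$ (and (ii-1) is already granted by Proposition~\ref{prop:MeZcheckCox}). The starting point is the Cox structure: conditionally on $\xi$, $N(C_{n,0})$ is Poisson with (random) mean $\Lambda_n := \int_{C_{n,0}}\xi(s)\,\dd s$. Hence for any integer $j$,
\[
  \P\bigl(N(C_{n,0})=j\bigr) = \E\left[ e^{-\Lambda_n}\,\frac{\Lambda_n^{\,j}}{j!}\right].
\]
With $j=\lfloor t_n\rfloor$ and $t_n=\lambda c_n+\mathcal O(\sqrt{c_n/k_n})$, I would write $\Lambda_n = \lfloor t_n\rfloor\, T_n$ with $T_n$ as in the statement, apply Stirling's formula to $j! = \lfloor t_n\rfloor!$, and expand $\log\bigl(e^{-\Lambda_n}\Lambda_n^{\,j}/j!\bigr)$ around $T_n=1$. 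The quadratic Taylor expansion of $j\log T_n - j(T_n-1)$ is exactly $-\tfrac{j}{2}(T_n-1)^2 + j\bigl(\log T_n-(T_n-1)+\tfrac12(T_n-1)^2\bigr)$, so that, up to the Stirling factor $(2\pi j)^{-1/2}$,
\[
  \sqrt{c_n}\,\P\bigl(N(C_{n,0})=\lfloor t_n\rfloor\bigr) \;\approx\; \sqrt{\frac{c_n}{2\pi \lfloor t_n\rfloor}}\;\E\!\left[ \exp\!\Bigl(-\tfrac{\lfloor t_n\rfloor}{2}(T_n-1)^2\Bigr)\,B_n\right],
\]
where $B_n$ is precisely the "error" random variable defined in the proposition. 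Since $\lfloor t_n\rfloor/c_n\to\lambda$, the prefactor $\sqrt{c_n/(2\pi\lfloor t_n\rfloor)}\to (2\pi\lambda)^{-1/2}$, so the whole problem reduces to identifying the limit of $\E[\exp(-\tfrac{\lfloor t_n\rfloor}{2}(T_n-1)^2)\,B_n]$.

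The second and main step is this limit. Note $\lfloor t_n\rfloor(T_n-1) = \Lambda_n - \lfloor t_n\rfloor = \bigl(N(C_{n,0})$'s conditional mean$\bigr) - \lfloor t_n\rfloor$, and $\lfloor t_n\rfloor^{1/2}(T_n-1) = \lfloor t_n\rfloor^{-1/2}(\Lambda_n-\lfloor t_n\rfloor)$. Under Assumption (iii), $c_n^{-1}\Var(\Lambda_n) = c_n^{-1}\Var\bigl(\int_{C_{n,0}}\xi\bigr) \to \sigma^2-\lambda = \lambda^2\int(g(w)-1)\,\dd w$ (this is the variance of the integrated latent field, which differs from $\Var(N(C_{n,0}))$ exactly by the Poisson term $\lambda c_n$), and $\E\Lambda_n = \lambda c_n$. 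So I would show, using the mixing assumption (iv) on $\xi$ together with the moment bound $\E|\xi(0)|^{2+\delta}<\infty$ and a CLT for mixing random fields (the same blocking/Bernstein machinery invoked for Theorem~\ref{thm:cltF}, applied now to $\xi$ rather than to $N$), that
\[
  \frac{\Lambda_n-\lambda c_n}{\sqrt{c_n}} \;\xrightarrow{\ d\ }\; \mathcal N\!\bigl(0,\ \tau^2\bigr),\qquad \tau^2 := \lambda^2\int_{\R^d}(g(w)-1)\,\dd w .
\]
Combining with $t_n-\lambda c_n = \mathcal O(\sqrt{c_n/k_n})\to 0$ relative to $\sqrt{c_n}$ only if $k_n\to\infty$ — which holds by (i) — one gets $\lfloor t_n\rfloor^{1/2}(T_n-1) \xrightarrow{d} \mathcal N(0,\tau^2/\lambda)$, hence $\tfrac{\lfloor t_n\rfloor}{2}(T_n-1)^2 \xrightarrow{d} \tfrac{\tau^2}{2\lambda}\,\chi^2_1$. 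Meanwhile $\log B_n = \lfloor t_n\rfloor\bigl(\log T_n-(T_n-1)+\tfrac12(T_n-1)^2\bigr)$; since $T_n\to 1$ in probability and the bracket is $O((T_n-1)^3)$, one has $\log B_n = \lfloor t_n\rfloor\cdot O\bigl((T_n-1)^3\bigr) = \lfloor t_n\rfloor^{-1/2}\cdot O\bigl(\bigl(\lfloor t_n\rfloor^{1/2}(T_n-1)\bigr)^3\bigr) \xrightarrow{P} 0$, so $B_n\xrightarrow{P}1$. The uniform-integrability hypothesis on $B_n$ is exactly what is needed to upgrade the joint convergence in distribution of $\bigl(\tfrac{\lfloor t_n\rfloor}{2}(T_n-1)^2,\ B_n\bigr)$ to convergence of the expectation of the product $e^{-\tfrac{\lfloor t_n\rfloor}{2}(T_n-1)^2}B_n$; since $e^{-\tfrac{\lfloor t_n\rfloor}{2}(T_n-1)^2}$ is bounded by $1$, this gives
\[
  \E\!\left[ e^{-\tfrac{\lfloor t_n\rfloor}{2}(T_n-1)^2}\,B_n\right] \longrightarrow \E\!\left[ e^{-\frac{\tau^2}{2\lambda} \chi^2_1}\right] = \Bigl(1+\tfrac{\tau^2}{\lambda}\Bigr)^{-1/2} = \Bigl(\tfrac{\sigma^2}{\lambda}\Bigr)^{-1/2},
\]
using the Laplace transform of a $\chi^2_1$ variable and $\sigma^2 = \lambda+\tau^2$. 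Multiplying by $(2\pi\lambda)^{-1/2}$ yields $(2\pi\sigma^2)^{-1/2}$, which is \eqref{eq:ii23Cox}.

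The third step is bookkeeping: deduce Assumption (ii). For (ii-2), apply the just-proved local limit theorem once with a general $t_n=\lambda c_n+\mathcal O(\sqrt{c_n/k_n})$ and once with $t_n=\lambda c_n$ (so $T_n$ becomes $\lfloor\lambda c_n\rfloor^{-1}\Lambda_n$); both $\sqrt{c_n}\,\P(\cdots)$ converge to the same limit $(2\pi\sigma^2)^{-1/2}>0$, so their ratio tends to $1$. For (ii-3), recall $f_Z(\Me_Z) = \P(N(C_{n,0})=\lfloor \Me_Z\rfloor)$ by \eqref{eq:fZ}; by Proposition~\ref{prop:MeZcheckCox}, $\Me_Z = \lambda c_n + \mathcal O(1)$, which is of the admissible form $\lambda c_n + \mathcal O(\sqrt{c_n/k_n})$ whenever $k_n$ does not grow too fast — and indeed $\mathcal O(1)$ is $\mathcal O(\sqrt{c_n/k_n})$ under (i) — so $\sqrt{c_n}\,f_Z(\Me_Z)\to(2\pi\sigma^2)^{-1/2}$, which is bounded above and below for large $n$; (ii-1) with $\ell=0$ is Proposition~\ref{prop:MeZcheckCox}.

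The main obstacle is the second step, specifically making rigorous the interchange of limit and expectation in $\E[e^{-\tfrac{\lfloor t_n\rfloor}{2}(T_n-1)^2}B_n]$ and handling the event $\{T_n\text{ small}\}$, where the Taylor expansion of $\log T_n$ degenerates (the cubic-remainder control of $\log B_n$ breaks down as $T_n\downarrow 0$). On that event, though, $e^{-\tfrac{\lfloor t_n\rfloor}{2}(T_n-1)^2}$ is already exponentially small, so I would split the expectation over $\{|T_n-1|\le\epsilon\}$ and its complement and bound the latter crudely; the uniform integrability of $B_n$ is what keeps the complementary contribution negligible. A secondary technical point is justifying the CLT for $c_n^{-1/2}(\Lambda_n-\lambda c_n)$: this requires the mixing rates of the field $\xi$ — which are inherited from those of $\bX$ in the Cox case, or can be assumed directly via (iv) applied to $\sigma$-algebras generated by $\xi$ — together with the $(2+\delta)$-moment and $\delta>2/\eta$ condition, exactly the Lyapunov-type trade-off standard in CLTs for weakly dependent arrays.
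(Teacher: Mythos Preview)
Your proposal is correct and follows essentially the same route as the paper: condition on $\xi$, apply Stirling's formula, factor the conditional Poisson mass as $A_nB_n$ with $A_n=\exp\bigl(-\tfrac{\lfloor t_n\rfloor}{2}(T_n-1)^2\bigr)$, invoke a CLT for mixing fields (the paper cites \citet[Theorem~3.3.1]{guyon:91}) to get $\sqrt{\lfloor t_n\rfloor}(T_n-1)\to\mathcal N(0,\lambda\!\int(g-1))$, show $B_n\to1$ in probability via the cubic Taylor remainder, and use the uniform integrability of $B_n$ together with the boundedness of $A_n$ to pass to the limit in expectation, landing on the Laplace transform of a $\chi_1^2$. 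The only cosmetic difference is that the paper separately upgrades $A_n\to A$ and $B_n\to1$ to $L^1$ convergence and then combines via $|A_nB_n-A|\le|B_n-1|+|A_n-A|$, whereas you argue directly that $A_nB_n$ is uniformly integrable since $|A_nB_n|\le B_n$; both are equivalent.
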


\subsection{Results}

In this section, we present  asymptotic results  for $\Fe(\cdot;\bZ)$, the empirical cumulative distribution function based on $\bZ$  and for the median-based estimator $\widehat\lambda^J$.

\begin{theorem}\label{thm:cltF}
Under the Assumptions~(i)-(iv), we have the  following two statements.\\
(a) Let $(a_n)_{n\geq 1}$ be a sequence of real numbers satisfying $\lambda c_n =a_n +o(\sqrt{c_n})$, then as $n\to \infty$
\[
  \sqrt{k_n} \left( \Fe (\lambda c_n+a_n;\bZ) - F_Z(\lambda c_n +a_n)\right) \to \mathcal N(0,1/4)
\]
in distribution.\\
(b) As $n\to \infty$
\[
    \sqrt{k_n} \left( \Fe (\Me_{Z};\bZ) -1/2\right) \to \mathcal N(0,1/4)
\]  
in distribution.
\end{theorem}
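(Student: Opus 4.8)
The plan is to prove (a) first, since (b) follows from it by specialising $a_n$ to $\Me_Z - \lambda c_n$. Fix the sequence $a_n$ with $\lambda c_n = a_n + o(\sqrt{c_n})$, write $t_n = \lambda c_n + a_n$, and set
\[
  S_n = \sum_{k\in\mathcal K_n} \left( \1\{Z_{n,k}\le t_n\} - F_Z(t_n)\right),
\]
so that the quantity of interest is $k_n^{-1/2} S_n$. The summands $\zeta_{n,k} = \1\{Z_{n,k}\le t_n\} - F_Z(t_n)$ are centred, bounded by $1$ in absolute value, identically distributed (by stationarity of $\bX$ and the i.i.d.\ jitter), and have variance $F_Z(t_n)(1-F_Z(t_n))$. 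The first step is to check that this variance tends to $1/4$: since $t_n = \lambda c_n + O(\sqrt{c_n})$ and $\E Z = \lambda c_n + 1/2$, one has $t_n - \E Z = O(\sqrt{c_n})$, while $\Var(Z) = \Var(N(C_{n,0})) \to c_n\sigma^2$ by Assumption~(iii); hence $(t_n - \E Z)/\sqrt{\Var Z}\to 0$, and — because $N(C_{n,0})$, suitably normalised, is asymptotically Gaussian (this is where Assumption~(iv), in particular $\alpha_{2,\infty}$, enters, to run a CLT for the counts inside a single growing cell, comparing $C_{n,0}$ with a small erosion $C_{n,0}^-$ as flagged in the text) — $F_Z(t_n) = \P(Z\le t_n)\to 1/2$. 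So $\Var(k_n^{-1/2}S_n)\to 1/4$.

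The second and main step is the CLT for $k_n^{-1/2}S_n$ under spatial dependence. Here I would use the blocking (big-block/small-block) technique of Ibragimov–Linnik: partition $\mathcal K_n$ into alternating large and small blocks of indices, with block side-lengths chosen as powers of $k_n^{1/d}$ so that the large blocks dominate in count while the cumulative variance contributed by small blocks is negligible. The spatial separation between large blocks is governed by the small-block width; the mixing coefficient $\alpha(m) = \sup_{p\ge1}\alpha_{p,p}(m)/p = O(m^{-d(1+\eta)})$ from Assumption~(iv) lets one (i) replace the sum over large blocks by a sum of \emph{independent} block-sums up to an error that vanishes (via the standard coupling/Volkonskii–Rozanov bound, using that each $\zeta_{n,k}$ is bounded by $1$ and a large block contains $O(c_n^{?})$-many cells — the relevant volume bound is why the $|\Lambda|\le j$ normalisation in $\alpha_{p,p}$ is used), and (ii) apply Lyapunov's CLT to the independentised block-sums, the Lyapunov ratio being controlled by boundedness of the summands together with the lower bound on the limiting variance ($1/4 > 0$). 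The last statement of Assumption~(i), namely $k_n / c_n^{\eta'/2\wedge(1-2\ell)}\to0$, is exactly the budget that makes the block sizes, the mixing error, and the small-block variance all balance; I would track it carefully.

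For part (b), apply (a) with $a_n := \Me_Z - \lambda c_n$. By Assumption~(ii-1), $a_n = O(c_n^\ell)$ with $\ell < 1/2$, so indeed $\lambda c_n = (\lambda c_n + a_n) - a_n$ and $a_n = o(\sqrt{c_n})$, i.e.\ the hypothesis of (a) holds with this choice; moreover $F_Z(\Me_Z) = 1/2$ by continuity of $F_Z$ (the jittering makes $F_Z$ continuous, so the median is attained exactly). Hence $\sqrt{k_n}(\Fe(\Me_Z;\bZ) - 1/2)\to\mathcal N(0,1/4)$. The main obstacle throughout is Step~2: making the blocking argument rigorous with blocks whose sizes depend on $n$ through both $k_n$ and $c_n$, and verifying that the mixing rate in Assumption~(iv) together with the growth condition in Assumption~(i) simultaneously kills the dependence-coupling error and the small-block remainder while keeping the Lyapunov condition satisfied; the triangular-array nature (the law of $\zeta_{n,k}$ changes with $n$) is what prevents one from simply quoting an off-the-shelf mixing CLT.
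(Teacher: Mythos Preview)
Your high-level architecture---show $F_Z(t_n)\to 1/2$, then an Ibragimov--Linnik blocking argument followed by Lyapunov---matches the paper's, but the blocking is done at a different scale, and this is where a real gap opens. The paper does \emph{not} partition the index set $\mathcal K_n$ into big and small groups of cells. Instead it erodes each individual cell: it replaces $Z_{n,k}$ by $Z_{n,k}^{-}=N(C_{n,k}^{-})+U_k$, where $C_{n,k}^{-}$ is $C_{n,k}$ shrunk by $\varepsilon_n c_n^{1/d}$, so that the $k_n$ eroded cells are pairwise separated by at least $2\varepsilon_n c_n^{1/d}$. The characteristic-function coupling (your step~(i)) is then carried out cell by cell, and after independentisation one has $k_n$ i.i.d.\ bounded summands with individual variance $\P(Z_{n,0}^{-}\le t_n^{-})(1-\P(Z_{n,0}^{-}\le t_n^{-}))\to 1/4$; Lyapunov is trivial and the limiting variance $1/4$ is read off directly.

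In your index-blocking scheme, by contrast, each big-block sum is $T_j=\sum_{k\in B_j}\zeta_{n,k}$, and both the Lyapunov normalisation and the identification of the limit as $\mathcal N(0,1/4)$ require $\sum_j\Var(T_j)\sim k_n/4$. That sum contains on the order of $k_n$ covariance terms $\Cov(\zeta_{n,k},\zeta_{n,k'})$ between \emph{adjacent} cells sharing a face; the mixing bound is useless at distance~$0$, and the trivial bound $|\Cov|\le 1$ is too crude since there are order~$k_n$ such pairs. You would need to prove separately that these adjacent-cell covariances tend to~$0$, which amounts to a bivariate CLT for $(N(C_{n,k}),N(C_{n,k'}))$---doable, but absent from your outline. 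The paper sidesteps this entirely; the price it pays is an extra step showing $D_n=\sum_k\bigl[\1\{Z_{n,k}\le t_n\}-\1\{Z_{n,k}^{-}\le t_n^{-}\}\bigr]=o_{\P}(\sqrt{k_n})$, and \emph{that} is where $\alpha_{2,\infty}$ is really used---not merely to get $F_Z(t_n)\to 1/2$ as you suggest, but to drive a \emph{degenerate} bivariate CLT for $(Z_{n,0},Z_{n,0}^{-})$ inside a single cell (limiting covariance matrix of rank~$1$), yielding $\P(Z_{n,0}\le t_n,\;Z_{n,0}^{-}\le t_n^{-})\to 1/2$ and hence $\Var(D_{n,0})\to 0$.

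Your derivation of~(b) from~(a) is fine; the stated hypothesis on $a_n$ in~(a) is evidently meant as $a_n=o(\sqrt{c_n})$, which is what you effectively use.
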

If $\bZ$ corresponds to a sample of $n$ i.i.d. random variables, $\sqrt n (\Fe(p;\bZ)-p)$ tends to a Gaussian random variable with mean zero and variance $p(1-p)$. Hence, we recover the same result as in our dependency setting.

The next result establishes a Bahadur representation for the sample median, leading to its asymptotic normality. The notation $X_n = o_\P(v_n^{-1})$ for a sequence of random variables $X_n$ and a sequence of positive real numbers $v_n$ means that $v_n X_n$ tends to 0 in probability as $n\to \infty$.

\begin{theorem}  \label{thm:bahadur}
Under the assumptions (i)-(iv), we have the  following two statements.\\
(a) As $n\to \infty$
\begin{equation}
  \label{eq:bahadur}
  \Mee(\bZ) - \Me_Z \; = \; \frac{1/2-\Fe(\Me_Z ; \bZ)}{f_Z(\Me_Z)} \; + \;  o_{\P} \left( \sqrt{\frac{c_n}{k_n}}\right).
\end{equation}
(b) Let $s_n = \sqrt{c_n} \P(N(C_{n,0})=\lfloor \lambda c_n \rfloor)$, then as $n\to \infty$
\begin{equation}
  \label{eq:convEst}
  |W_n|^{1/2} s_n \left( \widehat \lambda^J - \lambda \right) \to \mathcal N (0,1/4)
\end{equation}
in distribution.
\end{theorem}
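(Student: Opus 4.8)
The plan is to prove part (a) first and then derive part (b) as a relatively routine consequence. For part (a), the standard route to a Bahadur representation for a sample quantile is to apply a monotonicity/sandwiching argument: since $\Fe(\cdot;\bZ)$ is nondecreasing, for any real number $t$ the event $\{\Mee(\bZ)\le t\}$ is essentially the event $\{\Fe(t;\bZ)\ge 1/2\}$. I would set $\Mee(\bZ)=\Me_Z + r_n$ with the conjectured representation writing $r_n = \big(1/2-\Fe(\Me_Z;\bZ)\big)/f_Z(\Me_Z) + R_n$, and aim to show $\sqrt{k_n/c_n}\,R_n\to 0$ in probability. Concretely, for fixed $u\in\R$ consider the point $t_n(u)=\Me_Z + u\sqrt{c_n/k_n}$; I would show that $\sqrt{k_n}\big(\Fe(t_n(u);\bZ)-\Fe(\Me_Z;\bZ)\big)$ and $\sqrt{k_n}\big(F_Z(t_n(u))-F_Z(\Me_Z)\big)$ differ by $o_\P(1)$ — an equicontinuity/stochastic-equicontinuity statement for the empirical process at scale $\sqrt{c_n/k_n}$ around $\Me_Z$ — and that the deterministic increment $F_Z(t_n(u))-F_Z(\Me_Z)$ is asymptotically $u\sqrt{c_n/k_n}\,f_Z(\Me_Z)$ (using (ii-2), which controls the ratio $f_Z$ over the relevant window, and (ii-3), which keeps $\sqrt{c_n}f_Z(\Me_Z)$ bounded away from $0$ and $\infty$, so that the scale $\sqrt{c_n/k_n}$ is exactly the right order). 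Combining these with Theorem~\ref{thm:cltF}(b), which gives $\sqrt{k_n}\big(\Fe(\Me_Z;\bZ)-1/2\big)\to\mathcal N(0,1/4)$, one inverts the relation $\Fe(\Mee(\bZ);\bZ)\approx 1/2$ to pin down $r_n$ up to $o_\P(\sqrt{c_n/k_n})$ and obtain \eqref{eq:bahadur}. The bracketing step can be made rigorous by the usual device: for any $\varepsilon>0$, on the event that the empirical process is $\varepsilon$-close to its mean on the window, $\{\Mee(\bZ)>\Me_Z+u\sqrt{c_n/k_n}\}$ forces $\Fe(\Me_Z;\bZ)$ to exceed $1/2 - u\sqrt{k_n/c_n}f_Z(\Me_Z)+\text{(small)}$, and symmetrically for the lower tail, which together squeeze $\sqrt{k_n/c_n}\,r_n$ between two expressions both converging to $\big(1/2-\Fe(\Me_Z;\bZ)\big)/(\sqrt{c_n}f_Z(\Me_Z))\cdot\sqrt{c_n/k_n}^{-1}\cdot\sqrt{c_n/k_n}$, i.e. to the claimed term.

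For part (b), I would start from \eqref{eq:lambdaJ}, $\widehat\lambda^J=\Mee(\bZ)/c_n$, and write
\[
  |W_n|^{1/2}\big(\widehat\lambda^J-\lambda\big)
  = \frac{|W_n|^{1/2}}{c_n}\big(\Mee(\bZ)-\Me_Z\big) + \frac{|W_n|^{1/2}}{c_n}\big(\Me_Z-\lambda c_n\big).
\]
Since $|W_n|=k_n c_n$, the prefactor is $\sqrt{k_n/c_n}$. The second term equals $\sqrt{k_n/c_n}\,(\Me_Z-\lambda c_n)$, which by Assumption (ii-1) is $\mathcal O\big(\sqrt{k_n/c_n}\,c_n^{\ell}\big)=\mathcal O\big(\sqrt{k_n\,c_n^{2\ell-1}}\big)$, and this tends to $0$ precisely because the last clause of Assumption (i) forces $k_n/c_n^{1-2\ell}\to 0$ (the $(1-2\ell)$ exponent in the $\wedge$ is there for exactly this reason). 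For the first term, multiply \eqref{eq:bahadur} by $\sqrt{k_n/c_n}$: the remainder becomes $o_\P(1)$ by definition, and the main part becomes
\[
  \sqrt{\frac{k_n}{c_n}}\cdot\frac{1/2-\Fe(\Me_Z;\bZ)}{f_Z(\Me_Z)}
  = -\,\frac{\sqrt{k_n}\big(\Fe(\Me_Z;\bZ)-1/2\big)}{\sqrt{c_n}\,f_Z(\Me_Z)}
  = -\,\frac{\sqrt{k_n}\big(\Fe(\Me_Z;\bZ)-1/2\big)}{s_n},
\]
where $s_n=\sqrt{c_n}\,f_Z(\Me_Z)=\sqrt{c_n}\,\P(N(C_{n,0})=\lfloor\lambda c_n\rfloor)$ after checking that $f_Z(\Me_Z)=\P(N(C_{n,0})=\lfloor\Me_Z\rfloor)$ agrees with $\P(N(C_{n,0})=\lfloor\lambda c_n\rfloor)$ in the limit — this is exactly what (ii-2) delivers, since $\Me_Z=\lambda c_n+\mathcal O(c_n^\ell)$ and $\ell<1/2$ place $\Me_Z$ inside the allowed window. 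By Theorem~\ref{thm:cltF}(b) the numerator converges to $\mathcal N(0,1/4)$, and $s_n$ is a deterministic sequence bounded away from $0$ and $\infty$ by (ii-3); multiplying through by $s_n$ and using Slutsky gives $s_n|W_n|^{1/2}(\widehat\lambda^J-\lambda)\to\mathcal N(0,1/4)$, which is \eqref{eq:convEst}.

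The main obstacle is the stochastic equicontinuity estimate in part (a): showing that $\sqrt{k_n}\,\big(\Fe(\Me_Z+u\sqrt{c_n/k_n};\bZ)-\Fe(\Me_Z;\bZ)\big) - \sqrt{k_n}\,\big(F_Z(\Me_Z+u\sqrt{c_n/k_n})-F_Z(\Me_Z)\big)\xrightarrow{\P}0$. The difficulty is genuinely structural rather than cosmetic: the $Z_{n,k}$ are dependent (only weakly, via the mixing Assumption (iv)), their common law changes with $n$, and the window shrinks with $n$, so one cannot simply quote an off-the-shelf i.i.d. empirical-process result. I would handle it by bounding the variance of the centered increment using a mixing-based covariance inequality — the increment $\1\{Z_{n,k}\le t_n(u)\}-\1\{Z_{n,k}\le\Me_Z\}$ has mean of order $\sqrt{c_n/k_n}\,f_Z(\Me_Z)=\mathcal O(1/\sqrt{k_n})$ (so its second moment is of the same order, since it is a $\{-1,0,1\}$-valued indicator difference), whence the variance of the normalized sum is $\mathcal O(1)$ times $\sqrt{c_n/k_n}\,f_Z(\Me_Z)\to 0$, provided the covariance sum over $k\neq k'$ is controlled — and this is where the two mixing coefficients $\alpha(m)$ and $\alpha_{2,\infty}(m)$ in (iv), together with the summability $\sum_m m^{d-1}\alpha(m)<\infty$ implied by the $m^{-d(1+\eta)}$ decay, do the work, much as in the proof of Theorem~\ref{thm:cltF} referenced in the text. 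A Markov/Chebyshev bound then yields the pointwise-in-$u$ convergence, and monotonicity of both $\Fe(\cdot;\bZ)$ and $F_Z$ upgrades it to the uniform-over-$u$-in-compacts statement needed to run the sandwich argument. Everything else — identifying $f_Z$ via \eqref{eq:fZ}, the algebra relating $|W_n|$, $k_n$, $c_n$, and the invocation of Assumptions (i) and (ii) — is bookkeeping.
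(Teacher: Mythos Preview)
Your proposal is correct and follows essentially the same route as the paper: the key ingredients for (a) are exactly the two you identify --- the deterministic increment $F_Z(\Me_Z+u\sqrt{c_n/k_n})-F_Z(\Me_Z)\sim u\sqrt{c_n/k_n}\,f_Z(\Me_Z)$ via (ii-1)--(ii-2), and the stochastic-equicontinuity statement that $\sqrt{k_n}$ times the centered empirical increment over that window vanishes in probability, proved by a variance bound using the mixing decay in (iv) --- and part (b) is handled identically. The only cosmetic difference is packaging: the paper invokes Ghosh's Lemma (\textit{Ann.\ Math.\ Statist.}\ 1971), which takes as hypotheses precisely your tightness of $B_n(\Me_Z)$ and the pointwise-in-$y$ ``sandwich'' probabilities, and delivers $A_n-B_n(\Me_Z)\to 0$ directly without the separate monotonicity upgrade to uniformity-over-compacts that you sketch at the end.
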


\noindent We deduce the following Corollary given without proof for Cox point processes.

\begin{corollary} \label{cor:lambdaEstCox}
Under the Assumption (i) and the Assumptions of Proposition~\ref{prop:iiCox}, we have as $n\to \infty$
\[
  |W_n|^{1/2 } \left( \widehat \lambda^J - \lambda \right) \to \mathcal N\left(
  0,  \pi \sigma^2/2 \right)
\]
in distribution, where 
$\sigma^2 = \lambda + \lambda^2 \int_{\R^d} (g(w)-1)\dd w$.  
\end{corollary}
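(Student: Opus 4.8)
\textbf{Proof proposal for Corollary~\ref{cor:lambdaEstCox}.}

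The plan is to obtain the stated CLT by combining Theorem~\ref{thm:bahadur}(b) with the asymptotic evaluation of the normalizing sequence $s_n$ supplied by Proposition~\ref{prop:iiCox}. First I would check that the hypotheses of Proposition~\ref{prop:iiCox} (the moment condition $\E(|\xi(0)|^{2+\delta})<\infty$ with $\delta>2/\eta$, the uniform integrability of $(B_n)_n$, and Assumptions (iii)--(iv)) together with Assumption (i) are exactly what is needed to invoke the preceding results: Proposition~\ref{prop:iiCox} guarantees that Assumption (ii) holds with $\ell=0$, so the full set of Assumptions (i)--(iv) required by Theorem~\ref{thm:bahadur} is in force. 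Note also that since $\bX$ is a Cox process, Proposition~\ref{prop:MeZcheckCox} gives (ii-1) with $\ell=0$, which is consistent with the simplified form of Assumption (i).

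Next I would apply Theorem~\ref{thm:bahadur}(b): with $s_n = \sqrt{c_n}\,\P(N(C_{n,0})=\lfloor \lambda c_n\rfloor)$ we have
\[
  |W_n|^{1/2} s_n \left( \widehat\lambda^J - \lambda \right) \to \mathcal N(0,1/4)
\]
in distribution as $n\to\infty$. The key additional ingredient is the limit~\eqref{eq:ii23Cox} from Proposition~\ref{prop:iiCox}, namely $s_n \to (2\pi\sigma^2)^{-1/2}$ with $\sigma^2 = \lambda + \lambda^2\int_{\R^d}(g(w)-1)\dd w$. Since this limit is a strictly positive finite constant (positivity follows because $\sigma^2$ is the limiting variance density $|\Delta_n|^{-1}\Var(N(\Delta_n))\to\sigma^2$, which is positive; here I would briefly justify $\sigma^2>0$, e.g.\ because $\Var(N(C_{n,0}))/c_n\to\sigma^2$ and the left-hand side is bounded below away from $0$ for a non-degenerate process, or invoke that $s_n$ itself is bounded below by $\underline\kappa$ via (ii-3)), we may divide through by $s_n$.

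Finally, by Slutsky's lemma, writing $|W_n|^{1/2}(\widehat\lambda^J-\lambda) = s_n^{-1}\cdot |W_n|^{1/2} s_n(\widehat\lambda^J-\lambda)$ and using $s_n^{-1}\to (2\pi\sigma^2)^{1/2}$, we conclude
\[
  |W_n|^{1/2}\left(\widehat\lambda^J-\lambda\right) \to \mathcal N\!\left(0,\; (2\pi\sigma^2)\cdot \tfrac14\right) = \mathcal N\!\left(0,\; \pi\sigma^2/2\right)
\]
in distribution, as claimed. I do not expect any genuine obstacle here: the corollary is essentially a packaging of Theorem~\ref{thm:bahadur}(b) and Proposition~\ref{prop:iiCox} via Slutsky, and indeed the statement in the excerpt is offered ``without proof''. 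The only point requiring a line of care is confirming that the constant $(2\pi\sigma^2)^{-1/2}$ is a deterministic nonzero limit so that Slutsky applies cleanly, and that the hypotheses assumed in the corollary really do encompass all of (i)--(iv) plus the extra Cox-specific conditions of Proposition~\ref{prop:iiCox}.
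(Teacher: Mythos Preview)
Your proposal is correct and is exactly the intended argument: the paper explicitly states the corollary ``without proof'' because it is an immediate consequence of Theorem~\ref{thm:bahadur}(b) combined with the limit $s_n\to(2\pi\sigma^2)^{-1/2}$ from Proposition~\ref{prop:iiCox} via Slutsky's lemma, after noting that the hypotheses of the corollary yield the full set of Assumptions (i)--(iv). Your verification that Proposition~\ref{prop:iiCox} supplies Assumption~(ii) (with $\ell=0$) so that Theorem~\ref{thm:bahadur} applies, and your remark on the positivity of the limiting constant, are the only points worth spelling out, and you have handled them appropriately.
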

As detailed after Proposition \ref{prop:MeZ}, $\sigma^2$ corresponds to the asymptotic variance of $|W_n|^{-1} N({W_n})$. Actually, if we denote by $\widehat \lambda^{\mathrm{std}}$ the standard estimator of $\lambda$ given by $\widehat \lambda^{\mathrm{std}} = |W_n|^{-1} N({W_n})$ then with quite similar assumptions, it has been proved, see e.g. \citet{heinrich:prokevsova:10}, that $|W_n|^{1/2}(\widehat \lambda^{\mathrm{std}} - \lambda) \to \mathcal N(0,\sigma^2)$. It is worth noting that the two estimators $\widehat\lambda^{\mathrm{std}}$ and $\widehat\lambda^J$ only differ from their asymptotic variance and that the ratio of the asymptotic variances is equal to $\pi/2$. When we estimate the location of a Gaussian sample using the sample mean or the sample median, it is remarkable that the ratio of the asymptotic variances is also $\pi/2$.

Finally, let us add that on the basis of Corollary~\ref{cor:lambdaEstCox}, an asymptotic confidence interval of $\lambda$ can be constructed using a consistent estimator of $\sigma^2$. By the previous remark, we can use the kernel-based estimator proposed by \citet{heinrich:prokevsova:10} (or any other estimator presented in the mentioned paper), which precisely estimates the asymptotic variance of $\widehat\lambda^{\mathrm{std}}$, i.e. $\sigma^2$.

\section{Simulation study} \label{sec:sim}

We present in this section a simulation study where, in particular, we intend to compare  the median-based estimator defined by~\eqref{eq:lambdaJ} with the standard moment-based estimator $\widehat \lambda^{\mathrm{std}} = N(W)/|W|$. In the end of this section, we also investigate the robust estimator proposed by \cite{berndt:stoyan:97}. 

We focus on the planar case $d=2$. Three models of spatial point processes are considered:
\begin{itemize}
  \item Poisson point processes (referred to as {\sc poisson}) with intensity $\lambda$.
  \item Log-Gaussian Cox Processes (referred to as {\sc lgcp}) point processes with exponential covariance function. We fixed the variance to $0.5$ and $\phi$ to $0.02$. The parameter $\mu$ is fixed by the relation $\mu= \log \lambda -\sigma^2/2$ (see Section~\ref{sec:background} for details).
  \item Poisson hard-core (referred to as {\sc phc}) with parameter $\beta$ and hard core $R$. The resulting process consists in a conditional Poisson point process with intensity $\beta$ where we have conditioned on the hard core condition which consists in prohibiting points from being closer than distance $R$ apart. We fixed $\beta$ to 200 and $R$ to $0.05$.
\end{itemize}
The {\sc poisson} model is used as a benchmark. The {\sc lgcp} model enters into the class of Cox point processes for which we focused a lot in our asymptotic results. We have also considered the Thomas model in a separate simulation study and have obtained quite similar results to the {\sc lgcp} case. The {\sc phc} model is used as an example of repulsive point process model. Note that for fixed $\beta, R$ the intensity parameter $\lambda$ of the process is not explicit. In our simulation study, we fixed $\lambda=100$ for the {\sc poisson} and {\sc lgcp} models. With the parameters $\beta=200$ and $R=0.05$, we estimated  the intensity of a {\sc phc} to $\lambda \simeq 86$ using 10000 Monte-Carlo replications. 
The simulations have been performed using the \texttt{R} package \texttt{spatstat} \citep{baddeley:turner:05}. 

To illustrate the performances of~\eqref{eq:lambdaJ} we generated the point processes on the domain of observation $W_n=[-n,n]^2$ for different values of $n$ and considered the three following settings: let $\by$ be a realization from one of the three models described above, generated on $W_n$ and with $m$ points. The observed point pattern is denoted by $\bx$ and is obtained as follows.
\begin{itemize}
  \item[(A)] Pure case: no modification is considered, $\bx=\by$.
  \item[(B)] A few points are added: in a sub-square $\Delta_n$ with side-length $n/5$ included in $W_n$ and randomly chosen, we  generated a point process $\by^{\mathrm{add}}$ of $n^{\mathrm{add}}= \rho \, m$ uniform points in $\Delta_n$. We chose $\rho=0.05$ or $0.1$. Then, we defined $\bx=\by \cup \by^{\mathrm{add}}$.
  \item[(C)] A few points are deleted: let $\Delta_n = \cup_{q=1}^4 \Delta_n^q$ where the $\Delta_n^q$'s are the four equally sized squares included in $W_n$, located in each corner  of $W_n$. The volume of $\Delta_n$ is chosen such that $\E(N({\Delta_n})) = \rho \, \E(N(W_n)) = \rho \lambda |W_n|$ and we chose either $\rho=0.05$ or $0.1$. Then, we define $\bx= \by\setminus (\by\cap{\Delta_n})$, .i.e. $\bx$ is the initial configuration thinned by 5\% or 10\% of its points.
\end{itemize}
We conducted a Monte-Carlo simulation and generated $1000$ replications of the models {\sc poisson, lgcp, phc} and for the three different settings (A)-(C). The observation windows for which we report the empirical results hereafter are $n=1,2$. Regarding the setting (C), we placed the squares in which points are thinned at the corners of $W_n$. By stationarity, the empirical results are the same if we  decide to choose them randomly. For each replication, we evaluated $\widehat\lambda^{\mathrm{std}}$ and $\widehat\lambda^J$ for different number of non-overlapping and equally sized cells $k_n$. More precisely, we chose $k_n=9,16,25,36,49$.  

Empirical means and standard deviations related to the case (A) are reported in Table~\ref{tab:nothing}. We can check that the standard estimator is of course unbiased and that the standard deviation decreases by a factor close to 2, which is equal to $\sqrt{|W_2|/|W_1|}$. The median-based estimator is not theoretically unbiased but the bias is clearly not important and tends to decrease when the observation window grows up. Similarly, the rate of convergence of the empirical standard deviation is not too far from the expected value 2. We also computed separately $\widehat \Var(\widehat \lambda^{\mathrm{std}}) / \widehat \Var(\widehat \lambda^J)$ for each value of $k_n$ and $n$ and found interesting that these ratios are not too far from $\pi/2$. Finally, we underline that the choice of the number of cells $k_n$ has  little influence on the performances. When $n=1$, a too large value of $k_n$ seems to increase the bias, especially for the {\sc lgcp} and {\sc phc} model. The differences are however reduced when $n=2$. We also note that the empirical variance is almost the same whatever the value of $k_n$.

Tables~\ref{tab:add} and~\ref{tab:delete} are respectively related to the settings (B) and (C) (described above). Both these contaminations (B) or (C) can affect significantly the bias of the estimator. In both tables, we report the bias of the different estimators and the gain (in percent) in terms of mean squared error of the median based-estimator with respect to the standard one, i.e. for each model and each value of $\rho, n, k_n$, we computed 
\begin{equation}
    \label{eq:gain}
    \widehat{\mathrm{Gain}} =  \left( \frac{\widehat{\mathrm{MSE}}(\widehat\lambda^{\mathrm{std}}) - 
  \widehat{\mathrm{MSE}}(\widehat\lambda^{J})}{\widehat{\mathrm{MSE}}(\widehat\lambda^{\mathrm{std}})} \right) \times 100\%
  \end{equation}  
  where $\widehat{\mathrm{MSE}}$ is the empirical mean squared error based on the $1000$ replications. Thus a positive (resp. negative) empirical gain means that the median-based estimator is more efficient (resp. less efficient) than the standard procedure.

The standard estimator,  based only on the global number of points, is of course not robust to perturbations. It is clearly seen that $\widehat\lambda^{\mathrm{std}}$ has a positive bias when we add points (setting (B)) and a negative one when we delete points (setting (C)). This bias is obviously all the more important as $\rho$ (the ratio of points added or deleted) increases. Unlike this, the median-based estimator shows its advantages. When points are added (setting (B)), the estimator $\widehat \lambda^J$ remains much more stable and is more efficient in terms of MSE except when $\rho=0.05$ and $n=1$ for the three models where the empirical results do not lead to clear conclusions. When $n$ and/or $\rho$ increases the gains in percent are quite important and it is worth noticing that results do not fluctuate that much with the choice of the number of cells $k_n$. We also mention that, in a separate simulation study not shown here,  we  tried to add a clustered point process or repulsive point process, instead of adding  uniform points. The empirical results remained almost unchanged. 

Comments regarding Table~\ref{tab:delete} (setting (C)) are very similar. The results vary a lot when $n=1$ and $\rho=0.05$ but as soon as one of this parameter increases, the bias of the median-based estimators are clearly reduced, which implies they outperform significantly the standard estimator.

\begin{table}[H]
\centering
{\small\begin{tabular}{rrrrrrr}
  \hline
&  \multicolumn{6}{c}{Emprical mean (Standard Deviation)} \\
& \multicolumn{1}{c}{$\widehat\lambda^{\mathrm{std}}$} & \multicolumn{5}{c}{$\widehat \lambda^J$}\\
 &  & $k_n=9$ & $16$  & $25$ & $36$  & $49$ \\
  \hline
\multicolumn{3}{l}{ {\sc poisson} } &&\\
 $n=1$ &  99.6 (4.9) & 100.5 (5.9) & 101 (5.7) & 101.8 (6) & 102.8 (6.1) & 104 (6) \\
  $n=2$ & 99.9 (2.5) & 100.2 (3) & 100.3 (3) & 100.5 (3.1) & 100.5 (3.1) & 101 (3) \\
&&&&&&\\
\multicolumn{3}{l}{ {\sc lgcp}  }&&&\\
$n=1$&100.3 (5.5) & 101.2 (6.6) & 101.8 (6.4) & 102.3 (6.6) & 103 (6.8) & 104.2 (6.8) \\
$n=2$ & 100.1 (2.7) & 100.5 (3.2) & 100.4 (3.2) & 100.6 (3.3) & 100.7 (3.3) & 101 (3.3) \\
&&&&&&\\
\multicolumn{3}{l}{ {\sc phc} } &&&\\
  $n=1$ &86 (3) & 87.3 (4) & 87.7 (3.9) & 88.9 (4.1) & 90.1 (4.1) & 91.7 (4.2) \\
  $n=2$ &86 (1.6) & 86.3 (2) & 86.4 (2) & 86.7 (2) & 87 (2) & 87.4 (2.1) \\
   \hline
\end{tabular}
\caption{\label{tab:nothing} Empirical means and standard deviations between brackets of estimates of the intensity $\lambda$ for different models of spatial point processes ({\sc poisson, lgcp, phc}). The empirical results are based on 1000 replications simulated on $[-n,n]^2$ for $n=1,2$. The second and third columns correspond to the standard estimator $\widehat\lambda^{\mathrm{std}}=N({W_n})/|W_n|$, while the following ones correspond to the median-based estimator~\eqref{eq:lambdaJ} for different number of cells $k_n$. The intensity $\lambda$ equals 100 for the models {\sc poisson,lgcp} and (approximately) 86 for the model {\sc phc}.}
}
\end{table}

\begin{table}[H]
\centering
\begin{tabular}{rrrrrrr}
  \hline
 & \multicolumn{5}{c}{Bias (Gain of MSE \%)}\\
 & std & $k_n=9$ & $16$  & $25$ & $36$  & $49$ \\
  \hline
\multicolumn{1}{l}{$\rho=0.05$}  &&&&&&\\\
{\sc poisson}, $n=1$  &5.4 (0) & 4.2 (-14) & 4.5 (-9) & 4.8 (-19) & 6.3 (-53) & 7.1 (-75) \\
  $n=2$ & 5 (0) & 1.9 (50) & 1.8 (51) & 2.2 (47) & 2.3 (47) & 2.7 (43) \\ 
  {\sc lgcp}, $n=1$ & 5 (0) & 3.2 (-9) & 3.6 (-6) & 4.6 (-26) & 5.3 (-35) & 6.4 (-64) \\
  $n=2$ & 5.2 (0) & 2.2 (44) & 2.2 (49) & 2.2 (46) & 2.5 (47) & 2.8 (39) \\ 
   {\sc phc}, $n=1$ & 5 (0) & 3.3 (14) & 3.8 (5) & 4.9 (-19) & 6 (-53) & 7.6 (-110) \\
  $n=2$ &5 (0) & 1.4 (70) & 1.6 (72) & 1.8 (70) & 2.1 (66) & 2.5 (57) \\
 \hline
  \multicolumn{1}{l}{$\rho=0.1$}  &&&&&&\\\
   {\sc poisson}, $n=1$ &10.1 (0) & 4.7 (44) & 5 (45) & 5.7 (35) & 6.9 (29) & 7.8 (17) \\
  $n=2$ & 10.1 (0) & 2.6 (79) & 2.2 (84) & 2.5 (83) & 2.7 (83) & 2.8 (83) \\
  {\sc lgcp}, $n=1$ &9.8 (0) & 4.4 (38) & 5 (41) & 5.7 (30) & 6.3 (28) & 7.2 (21) \\
  $n=2$ &9.8 (0) & 2.2 (81) & 2.3 (79) & 2.2 (82) & 2.2 (81) & 2.5 (81) \\
   {\sc phc}, $n=1$ &10 (0) & 3.9 (64) & 4.1 (66) & 5.3 (57) & 6.5 (44) & 8 (24) \\
  $n=2$ &  10 (0) & 1.8 (89) & 1.7 (92) & 1.9 (91) & 2.1 (91) & 2.5 (89) \\
   \hline
\end{tabular}

\caption{\label{tab:add} Bias and empirical gains in percent between brackets, see~\eqref{eq:gain}, for the standard and  median based estimators for different values of $k_n$. The empirical results are based on 1000 replications generated on $[-n,n]^2$ for $n=1,2$ for the models {\sc poisson, lgcp, phc} where $5\%$ or $10\%$ of points are added to each configuration. This corresponds to the case (B) described in details above.}
\end{table}

\begin{table}[H]
\centering
\begin{tabular}{rrrrrrr}
  \hline
 & \multicolumn{5}{c}{Bias (Gain of MSE \%)}\\
 & std & $k_n=9$ & $16$  & $25$ & $36$  & $49$ \\
  \hline
\multicolumn{1}{l}{$\rho=0.05$}  &&&&&&\\\
{\sc poisson}, $n=1$  &
   -5 (0) & -3.9 (-13) & -2.8 (2) & -2 (8) & -0.7 (13) & 0.6 (11) \\
  $n=2$ & 
  -4.9 (0) & -4.2 (6) & -3 (34) & -2 (46) & -1 (59) & -0.4 (62) \\
  {\sc lgcp}, $n=1$ & 
 -5 (0) & -3.9 (0) & -3.1 (-2) & -2.1 (2) & -1.1 (17) & 0.1 (18) \\
  $n=2$ & 
 -5 (0) & -4.3 (3) & -3.2 (27) & -2 (46) & -1.2 (60) & -0.6 (62) \\
     {\sc phc}, $n=1$ & 
-4.2 (0) & -2.7 (11) & -1.6 (26) & 0 (25) & 1.6 (27) & 3.4 (-5) \\
  $n=2$ &
-4.3 (0) & -3.3 (22) & -1.8 (59) & -0.8 (72) & 0 (78) & 0.7 (75) \\
 \hline
  \multicolumn{1}{l}{$\rho=0.1$}  &&&&&&\\\
   {\sc poisson}, $n=1$ &
-10 (0) & -8.6 (1) & -6 (32) & -3.3 (53) & -1.1 (63) & -1.4 (65) \\
  $n=2$ & 
-10 (0) & -7.2 (34) & -3.1 (81) & -1.9 (88) & -1 (88) & -2.4 (86) \\
  {\sc lgcp}, $n=1$ &  
 -10.4 (0) & -9.3 (2) & -6.4 (35) & -4 (50) & -2.4 (59) & -2.7 (58) \\
  $n=2$ &
 -10 (0) & -7.6 (23) & -3.7 (73) & -2.3 (82) & -1.4 (86) & -2.7 (81) \\
   {\sc phc}, $n=1$ &
 -8.5 (0) & -6.7 (19) & -3.1 (62) & -0.7 (74) & 1.8 (72) & 1.9 (74) \\
  $n=2$ & 
 -8.6 (0) & -4.8 (55) & -1.8 (88) & -0.7 (92) & 0 (94) & -1 (91) \\
   \hline
\end{tabular}
  
\caption{\label{tab:delete} Bias and empirical gains in percent between brackets, see~\eqref{eq:gain}, for the standard and median based estimators for different values of $k_n$. The empirical results are based on 1000 replications generated on $[-n,n]^2$ for $n=1,2$ for the models {\sc poisson, lgcp, phc} where $5\%$ or $10\%$ of points are deleted to each configuration. This corresponds to the case (C) described in details above.}
\end{table}

Finally, we compared our estimator with the one proposed by \citet{berndt:stoyan:97}. In particular, we used the version presented  by \citet[p.~252]{illian:et:al:08}. 
Let $G$ be a grid of $\#G$ dummy points, and for any $g\in G$, let $a(g,\bX)$ be the Voronoi cell area of the cell corresponding to the closest point of $g$ in $\bX$, then $(\#G)^{-1}\sum_g a(g,\bX)^{-1}$ is (up to edge effects) an unbiased estimator of $\lambda$. \citet{illian:et:al:08}, proposed to replace the previous sample mean by  a sample median or sample trimmed-mean.
We investigated the latter estimator, referred to as the Voronoi estimator in the following, in a shorter simulation study. We considered only the {\sc poisson} model (similar results were observed for the {\sc lgcp} and {\sc phc} models), fixed the grid $G$ to a regular grid of $\#G=200^2$ dummy points (the results were very stable to that parameter) and used a symmetrically trimmed mean  with a fraction of $f=0.025,0.05$ or $0.1$ observations trimmed from each end.
The \texttt{spatstat R} package was used to compute the Voronoi tessellation and cell areas. As suggested by the authors, to correct border effects, we removed all border cells from the analysis. Empirical means and standard deviations for the Voronoi estimator in the settings (A)-(C), described above, are reported in Table~\ref{tab:stoyan}. 

In the setting (A), the Voronoi estimator has a  bias comparable to the one of the median-based estimator (see Table~\ref{tab:nothing}) when $f=0.025$ and $f=0.05$. The bias is surprisingly very large when $f=0.1$. When  extra points  are observed (setting (B)) or omitted (setting (C)), the results are less ambiguous: the Voronoi estimator remains strongly biased, even more than with the standard estimator $\widehat\lambda^{\mathrm{std}}$ in a few cases when $f=0.025$ or $f=0.1$. The choice $f=0.05$ seems to offer a better compromise. We computed the gains for the Voronoi estimator as we did for the median-based estimator in Tables~\ref{tab:add}-\ref{tab:delete} using~\eqref{eq:gain}. The results, not reported, show that the median-based estimator is more efficient in the settings (A), (B) and (C) when $f=0.025$ and $f=0.1$ and slightly more efficient when $f=0.05$.

From a computational point of view, the Voronoi estimator is more expensive to evaluate. For instance it takes  6 seconds in average to evaluate the Voronoi estimator when $n=2$ while it takes approximately 0.03 second to evaluate the median-based estimator (for all the values of $k_n=9,16,25,36,49$). This precludes from using the Voronoi estimator (at least in that form) for very large point pattern. Asymptotic properties were not the focus of the paper by \citet{berndt:stoyan:97} or the book by~\citet{illian:et:al:08}. But another argument in support of our approach is that we believe that deriving asymptotic results for the Voronoi estimator (consistency, asymptotic variance, central limit theorem) looks more awkward.\\


\begin{table}[htbp]
\centering 
\begin{tabular}{rrrrrr}
\hline
&\multicolumn{1}{c}{(A)}&\multicolumn{2}{c}{(B)}& \multicolumn{2}{c}{(C)} \\
&&  $\rho=0.05$ & $\rho=0.1$ & $\rho=0.05$ &  $\rho=0.1$ \\
\hline
\multicolumn{1}{l}{$f=0.025$ \qquad\qquad}&&&&&\\
 $n=1$ &
98.4   (5.8)&104.7 (5.7)& 109.7 (5.9)&96 (5.4)&94.9 (5.7)\\
$n=2$ &
98  (2.6)& 104.6 (2.6)&107.3 (2.8) &95.4 (3)&91.1 (3.1)\\
\multicolumn{1}{l}{$f=0.05$ \qquad\qquad}&&&&&\\
 $n=1$ &
99.3 (5.5)&103.4 (5.5)&104.9 (5.7)&94.4  (4.9)&92 (5.4)\\
$n=2$ &
98.8 (2.7)&101.8 (2.5)&103.8 (2.7)&94 (2.5)&91.9  (3)\\
\multicolumn{1}{l}{$f=0.1$ \qquad\qquad}&&&&&\\
$n=1$ & 
94.1 (5.1) & 97.1 (5)&98.5 (5.1)&91.5 (5.1)&88.9 (5.5)\\
$n=2$ & 
94.5 (2.3)&97.4 (2.4) &97.1 (2.7)&88.9 (2.3) &87.9 (2.7)\\
\hline
\end{tabular}
\caption{\label{tab:stoyan} Empirical means and standard deviations between brackets of estimates of the intensity $\lambda=100$ based on 1000 replications of the {\sc poisson} on $[-n,n]^2$ for $n=1,2$ and in the settings (A), (B) and (C) described in the text. The estimator considered in this table is the Voronoi cell estimator proposed by \citet[p.252]{illian:et:al:08}. 
}
\end{table}

\section{Conclusion} \label{sec:conclusion}

In this paper, we propose a median-based estimator of the intensity parameter of a stationary spatial point process. We prove asymptotic properties of this estimator as the observation window expands to $\R^d$. In particular for a large class of models, we show that the estimator $\widehat \lambda^J$ satisfies a central limit theory, which allows us to derive asymptotic confidence intervals. 

As a general conclusion of the simulation study, it turns out that the estimator $\widehat\lambda^J$ confirms expected asymptotic properties and improves the robustness property of the standard procedure. Even if the choice of the tuning parameter $k_n$  has a moderate influence on the empirical results when the observation window is large or when the point pattern is strongly contaminated,  it is an open question to propose a data-driven procedure to select the number of cells $k_n$.

In this paper, we did not aim at detecting outliers or detecting areas where problems are suspected (abundance or lack of points). If the assumption of stationarity seems valid, a large difference between the median-based estimator and a classical estimator of the intensity parameter might allow the user to reconsider the observation window in a second step. 

The research contained in this paper leads to interesting open issues: (i) It could be worth continuing the comparison between the Voronoi estimator and our approach. This would require to propose a data-driven procedure to fix the tuning parameter of the fraction of deleted observations, to investigate a more evolved border correction and to derive asymptotic properties. (ii) In the setting (B), we considered outliers as extra points added in a small subsquare. Extra points could be uniformly distributed on the observation domain. Such a situation would be closer to the application in image analysis investigated by \citet{berndt:stoyan:97}. A similar problem was also considered by \citet{redenbach:15}. (iii) A quantile based estimator is the most natural way of defining a robust estimator. A more advanced technique would consist in extending  our theory to M-estimators. (iv) Another step could be to tackle the problem of robust estimators for second-order characteristics  like the $K, F$ or $G$ functions. (v) Finally, extending the methodology and results to the estimation of the intensity of inhomogeneous spatial point processes constitutes also an interesting perspective.

\section*{Acknowledgements}
The author would like to thank sincerely Michaela Prokešová for discussing the initial idea of this paper, Frédéric Lavancier for discussions on an earlier version of the manuscript and Jérôme Lelong for a careful reading. The author is very grateful to  the associate editor and the referee in particular for pointing out interesting references, which enriched the simulation study. The research of the author is partially funded by Persyval-lab EA Oculo-Nimbus.

\appendix
\section{Proofs} \label{sec:proofs}

In all the proofs, $\kappa$ denotes a generic constant which may vary from line to line. For $k=(k_1,\dots,k_d)^\top\in \Z^d$, we denote  $|k|=\max(|k_1|,\dots,|k_d|)$.


\subsection{Auxiliary results}

We present in this section an auxiliary lemma providing a control of the covariance of counting variables and a general central limit theorem adapted to our context. 

\begin{lemma}\label{lem:cov}
We define $\mathcal C_\tau$ the cube centered at $0$ with volume $\tau^d c_n$, i.e.
\[
  \mathcal C_\tau=\left\{
u=(u_1,\dots,u_d)^\top \in\R^d: |u_l|\leq \tau c_n^{1/d}/2, l=1,\dots,d
\right\}.
\]
Under  Assumption (iii), we have the  following three statements.\\
(a) For any $\tau\in (0,1]$
\[
  \Var(N({\mathcal C_\tau})) \sim |\mathcal C_\tau| \left( \lambda + \lambda^2 \int_{\R^d} (g(w)-1)\dd w\right)
\]
as $n\to \infty$.\\\noindent
(b) Let $\varepsilon\in(0,1)$ then
\[
  \Cov(N({\mathcal C_{1-\varepsilon}}),N({\mathcal C_1})) \sim
\lambda |\mathcal C_{1-\varepsilon}| + \lambda^2|\mathcal C_{1-\varepsilon/2}|\int_{\R^d} (g(w)-1)\dd w
\]
as $n\to \infty$.\\\noindent
(c) Let $(\varepsilon_n)_{n\geq 1}$ be a sequence of real numbers such that $\varepsilon_n \to 0$ and $c_n^{1/d}\varepsilon_n\to \infty$ as $n\to \infty$, then 
\begin{align*}
\Var( N({\mathcal C_{1-\varepsilon_n}})) &\sim  \Var( N({\mathcal C_{1}})) \sim \Cov(N({\mathcal C_{1-\varepsilon_n}}),N({\mathcal C_{1}}) )\\ &\sim c_n \left( \lambda + \lambda^2 \int_{\R^d}(g(w)-1)\dd w\right)
\end{align*}
as $n\to \infty$.
\end{lemma}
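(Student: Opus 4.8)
The plan is to reduce everything to a single asymptotic expansion of the variance of the count in an arbitrary cube whose side is a fixed multiple of $c_n^{1/d}$, and then obtain the covariance statement by a polarization-type identity. Concretely, for a cube $\Delta$ with volume $|\Delta|$ the second factorial moment identity~\eqref{e:campbell2} applied to $h=\1_\Delta\otimes\1_\Delta$ gives
\[
  \E\bigl(N(\Delta)(N(\Delta)-1)\bigr) = \int_\Delta\int_\Delta \rho^{(2)}(u,v)\,\dd u\,\dd v = \lambda^2\int_\Delta\int_\Delta g(u-v)\,\dd u\,\dd v,
\]
so that, using $\E N(\Delta)=\lambda|\Delta|$ and adding and subtracting $\lambda^2|\Delta|^2$,
\[
  \Var(N(\Delta)) = \lambda|\Delta| + \lambda^2\int_\Delta\int_\Delta\bigl(g(u-v)-1\bigr)\,\dd u\,\dd v.
\]
The double integral can be rewritten by the change of variables $w=u-v$ as $\int_{\R^d}(g(w)-1)\,|\Delta\cap(\Delta-w)|\,\dd w$. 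Since $|\Delta\cap(\Delta-w)|\le|\Delta|$ and, for $\Delta=\mathcal C_\tau$, $|\Delta\cap(\Delta-w)|/|\Delta|\to 1$ pointwise in $w$ as $c_n\to\infty$ (because $\mathcal C_\tau$ has side $\tau c_n^{1/d}\to\infty$), dominated convergence using the integrability hypothesis $\int|g(w)-1|\dd w<\infty$ from Assumption~(iii) yields
\[
  \int_{\mathcal C_\tau}\int_{\mathcal C_\tau}\bigl(g(u-v)-1\bigr)\,\dd u\,\dd v \sim |\mathcal C_\tau|\int_{\R^d}(g(w)-1)\,\dd w.
\]
This proves statement~(a).

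For statement~(b) I would run the same factorial-moment computation on the mixed term: by~\eqref{e:campbell2} with $h=\1_{\mathcal C_{1-\varepsilon}}\otimes\1_{\mathcal C_1}$ and symmetry,
\[
  \Cov\bigl(N(\mathcal C_{1-\varepsilon}),N(\mathcal C_1)\bigr) = \lambda|\mathcal C_{1-\varepsilon}| + \lambda^2\int_{\mathcal C_{1-\varepsilon}}\int_{\mathcal C_1}\bigl(g(u-v)-1\bigr)\,\dd u\,\dd v,
\]
using that $\mathcal C_{1-\varepsilon}\subset\mathcal C_1$ so $\mathcal C_{1-\varepsilon}\cap\mathcal C_1=\mathcal C_{1-\varepsilon}$ and $\E N(\mathcal C_{1-\varepsilon})=\lambda|\mathcal C_{1-\varepsilon}|$. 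The change of variables $w=u-v$ now gives $\int_{\R^d}(g(w)-1)\,|\mathcal C_{1-\varepsilon}\cap(\mathcal C_1-w)|\,\dd w$, and since both cubes are centered at $0$, for fixed $w$ we have $|\mathcal C_{1-\varepsilon}\cap(\mathcal C_1-w)|\to|\mathcal C_{1-\varepsilon}\cap\mathcal C_1'|$... the clean way is to note $|\mathcal C_{1-\varepsilon}\cap(\mathcal C_1-w)|/c_n \to (1-\varepsilon/2)^d$ in a suitable averaged sense; more precisely, for $w$ fixed the overlap volume divided by $c_n$ converges to $\min(1-\varepsilon,1)^d$ adjusted by the shift — here one must be slightly careful. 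The statement claims the limit is $\lambda^2|\mathcal C_{1-\varepsilon/2}|\int(g-1)$, i.e. the effective overlap volume is $((1-\varepsilon/2)c_n^{1/d})^d$; this is because $\mathcal C_{1-\varepsilon}$ has half-side $(1-\varepsilon)c_n^{1/d}/2$ and $\mathcal C_1$ has half-side $c_n^{1/d}/2$, and the "average" intersection half-side seen over small shifts $w$ is the average $(1-\varepsilon/2)c_n^{1/d}/2$ — so I would establish $|\mathcal C_{1-\varepsilon}\cap(\mathcal C_1-w)| = |\mathcal C_{1-\varepsilon/2}| + o(c_n)$ uniformly for $w$ in compacts, then apply dominated convergence exactly as in~(a). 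Statement~(c) is then immediate: taking $\varepsilon=\varepsilon_n\to 0$ with $c_n^{1/d}\varepsilon_n\to\infty$, one has $|\mathcal C_{1-\varepsilon_n}|\sim c_n$, $|\mathcal C_{1-\varepsilon_n/2}|\sim c_n$ and $|\mathcal C_1|=c_n$, so (a) applied to $\tau=1-\varepsilon_n$ and $\tau=1$ and (b) with $\varepsilon=\varepsilon_n$ all give the common equivalent $c_n(\lambda+\lambda^2\int(g(w)-1)\dd w)$; the hypothesis $c_n^{1/d}\varepsilon_n\to\infty$ guarantees the side of $\mathcal C_{1-\varepsilon_n}$ still tends to infinity so the dominated-convergence step remains valid with a limit independent of $n$.

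The main obstacle is the uniform control of the overlap volumes $|\mathcal C_\tau\cap(\mathcal C_{\tau'}-w)|$ needed to justify dominated convergence with a \emph{fixed} dominating function and the correct limit: one needs both a pointwise limit (where the $1-\varepsilon/2$ rather than $1-\varepsilon$ enters, which requires computing the coordinate-wise overlap length $\min\{(1-\varepsilon)c_n^{1/d}/2 - |w_l|/2\,,\,\dots\}$ carefully) and a uniform bound $|\mathcal C_\tau\cap(\mathcal C_{\tau'}-w)|\le \min(|\mathcal C_\tau|,|\mathcal C_{\tau'}|)$ so that the integrand is dominated by $|g(w)-1|\cdot c_n$ — but since we divide by $c_n$ throughout before passing to the limit, the dominating function becomes simply $\tau^d\wedge\tau'^d\cdot|g(w)-1|$ wait, more simply $|g(w)-1|$ times a constant, which is integrable. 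I would therefore carry out the argument after dividing by $c_n$ from the start, which makes the dominated-convergence hypotheses transparent, and handle the $\varepsilon_n\to 0$ case in~(c) by noting that the per-$w$ integrand is bounded by $|g(w)-1|$ uniformly in $n$ and converges pointwise to $|g(w)-1|$ once $c_n^{1/d}\varepsilon_n\to\infty$.
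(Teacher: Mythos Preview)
Your treatment of (a) and (c) is the paper's argument: Campbell's formula, the substitution $w=u-v$, and dominated convergence against $|g-1|\in L^1$. That part is fine.

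The gap is in (b). Your heuristic that the ``average half-side'' is $(1-\varepsilon/2)c_n^{1/d}/2$, and the claim that
$|\mathcal C_{1-\varepsilon}\cap(\mathcal C_1-w)|=|\mathcal C_{1-\varepsilon/2}|+o(c_n)$ uniformly for $w$ in compacts, are both false. For any fixed $w$, as soon as $|w_l|<\varepsilon c_n^{1/d}/2$ for every coordinate $l$ (which happens eventually since $c_n\to\infty$), the small cube $\mathcal C_{1-\varepsilon}$ sits entirely inside the shifted big cube $\mathcal C_1-w$, so the overlap is exactly $|\mathcal C_{1-\varepsilon}|=(1-\varepsilon)^dc_n$, not $(1-\varepsilon/2)^dc_n$. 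Hence your own dominated-convergence route, executed correctly, yields $\lambda^2|\mathcal C_{1-\varepsilon}|\int(g-1)$ for the second term, not $\lambda^2|\mathcal C_{1-\varepsilon/2}|\int(g-1)$.

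The paper does not argue via the pointwise limit; it instead writes the coordinate overlap length explicitly as $(1-\varepsilon/2)c_n^{1/d}-|w_l|$ on $\mathcal C_{2-\varepsilon}$ and recognizes the resulting product as $|\mathcal C_{1-\varepsilon/2}\cap(\mathcal C_{1-\varepsilon/2}-w)|$, thereby reducing (b) to the computation in (a) with $\tau=1-\varepsilon/2$. That displayed formula is itself only exact when $|w_l|\ge\varepsilon c_n^{1/d}/2$ (for smaller $|w_l|$ the length saturates at $(1-\varepsilon)c_n^{1/d}$), so the constant in (b) is somewhat cosmetic. Crucially, none of this matters for (c), the only place the lemma is used: with $\varepsilon=\varepsilon_n\to0$ one has $(1-\varepsilon_n)^d\sim(1-\varepsilon_n/2)^d\sim1$, so both versions collapse to the same asymptotic $c_n\bigl(\lambda+\lambda^2\int(g-1)\bigr)$. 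Your argument for (c) is therefore valid; just do not try to manufacture the $1-\varepsilon/2$ in (b) by an averaging argument that is not there.

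A minor correction on (c): the side of $\mathcal C_{1-\varepsilon_n}$ is $(1-\varepsilon_n)c_n^{1/d}$, which goes to infinity simply because $\varepsilon_n\to0$ and $c_n\to\infty$. The hypothesis $c_n^{1/d}\varepsilon_n\to\infty$ is not what ensures this; it is used elsewhere (the blocking step in Theorem~\ref{thm:cltF}), not here.
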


\begin{proof}
(a) is a classical result, see e.g. \citet{heinrich:prokevsova:10}. As we need to refer to specific equations, we report the proof here. By Campbell's Theorem and since $\bX$ admits a pair correlation function
\begin{align}
  \Var(N({\mathcal C_\tau})) &= \lambda |\mathcal C_\tau| + \int_{\R^d}\int_{\R^d} \1(u\in \mathcal C_\tau)\1(v\in \mathcal C_\tau)(g(u-v)-1)\dd u \dd v \nonumber \\
  &=\lambda |\mathcal C_\tau| + \lambda^2\int_{\R^d} |\mathcal C_\tau \cap (\mathcal C_\tau)_{-w}| (g(w)-1)\dd w \nonumber\\
  &=\lambda |\mathcal C_\tau| + \lambda^2\int_{\mathcal C_{2\tau}} |\mathcal C_\tau \cap (\mathcal C_\tau)_{-w}| (g(w)-1)\dd w \label{eq:cov1}\\
  &= \lambda |\mathcal C_\tau| + \lambda^2\int_{\mathcal C_{2\tau}} \prod_{l=1}^d (\tau c_n^{1/d}-|w_l|)
  (g((w_1,\dots,w_d)^\top)-1)\dd w_1 \dots \dd w_d \label{eq:cov2}\\
  &\sim |\mathcal C_\tau| \left( \lambda + \lambda^2 \int_{\R^d} (g(w)-1)\dd w\right) \nonumber
\end{align} 
by Assumption (iii).\\
(b) For brevity, let $K_\varepsilon$ denote the covariance to evaluate. Following (a) we have
\begin{align*}
K_\varepsilon &=\lambda |\mathcal C_{1-\varepsilon} \cap \mathcal C_1 | + \int_{\R^d}\int_{\R^d}\1(u\in \mathcal C_{1-\varepsilon})\1(v\in \mathcal C_1)(g(u-v)-1)\dd u \dd v \\
&= \lambda |\mathcal C_{1-\varepsilon}| + \lambda^2\int_{\R^d} |\mathcal C_{1-\varepsilon} \cap (\mathcal C_1)_{-w}| (g(w)-1)\dd w. 
\end{align*}
Let $w=(w_1,\dots,w_d)^\top$. We can check that 
\[
  |\mathcal C_{1-\varepsilon} \cap (\mathcal C_1)_{-w}| = \left\{
\begin{array}{ll}
  0 & \mbox{ if } w \in \R^d\setminus \mathcal C_{2-\varepsilon} \\
  \prod_{l=1}^d \left( \big(1-\frac\varepsilon 2\big) c_n^{1/d} -|w_l|\right) & \mbox{ if } w \in \mathcal C_{2-\varepsilon}
\end{array}
  \right.
\]
whereby we deduce using \eqref{eq:cov1}-\eqref{eq:cov2} and Assumption~(iii) that
\begin{align*}
K_\varepsilon &= \lambda |\mathcal C_{1-\varepsilon}| + \lambda^2\int_{\mathcal C_{2-\varepsilon}} \prod_{l=1}^d \Big((1-\varepsilon/2) c_n^{1/d}-|w_l|\Big)
  \Big(g((w_1,\dots,w_d)^\top)-1\Big)\dd w_1 \dots \dd w_d \\
  &=\lambda |\mathcal C_{1-\varepsilon}| + \lambda^2\int_{\mathcal C_{2-\varepsilon}} |\mathcal C_{1-\varepsilon/2} \cap (C_{1-\varepsilon/2})_{-w}| (g(w)-1)\dd w\\
  &\sim \lambda |\mathcal C_{1-\varepsilon}| + \lambda^2|\mathcal C_{1-\varepsilon/2}|\int_{\R^d} (g(w)-1)\dd w
\end{align*}
as $n \to \infty$.\\
(c) The assumptions on the sequence $(\varepsilon_n)$ allow us to apply (a)-(b) which leads to the result since $|\mathcal C_1|\sim |\mathcal C_{1-\varepsilon_n}|\sim |\mathcal C_{1-\varepsilon_n/2}|\sim c_n$ as $n\to \infty$.
\end{proof}

Now we present a central limit theorem for stationary random fields with asymptotic covariance matrix  not necessarily positive definite. It is  very close to~\citet[Theorem 3.3.1]{guyon:91} and to~\citet[Theorem 1]{karaczony:06} but we were not able to find it in the following form in the literature.

For two square matrices $A,B$, $A\geq B$ (resp. $A>B$) means that $A-B$ is a positive (resp. positive definite) matrix. Finally, $\|A\|$ stands for the Frobenius norm of $A$  given by $\|A\|=\mathrm{Tr}(A^\top A)^{1/2}$.

\begin{theorem} \label{thm:degenere}
Let $(X_k,k\in\Z^d)$ be a stationary  random field in a measurable space $S$. Let $\mathcal K_n\subset \Z^d$ with $k_n=|\mathcal K_n| \to \infty$ as $n\to \infty$. For any $n\geq 1$ and  $k\in \mathcal K_n$, we define $Y_{n,k}=f_{n,k}(X_k)$ where $f_{n,k}:S \to \R^p$ for some $p\geq 1$ is a measurable function. We denote by $S_n= \sum_{k\in \mathcal K_n} Y_{n,k}$ and by $\Sigma_n=\Var(S_n)$ and assume that for any $n\geq 1$, $k\in \mathcal K_n$, $\E Y_{n,k}=0$. We also assume that \\
(I) $\sup_{n\geq 1}\sup_{k\in \mathcal K_n} \|Y_{n,k}\|_\infty<\infty$. \\
(II)There exists $\eta>0$ such that $\alpha_{2,\infty}(m)=\mathcal O(m^{-d(1+\eta)})$.\\
(III) There exists $\Sigma\geq 0$ a $(p,p)$ matrix with rank $1\leq r\leq p$
such that $k_n^{-1} \Sigma_n \to \Sigma$ as $n\to \infty$.\\
Then, $k_n^{-1/2} S_n \to \mathcal N(0,\Sigma)$ in distribution as $n\to \infty$.
\end{theorem}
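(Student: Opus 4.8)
The plan is to prove the result by the Cramér--Wold device, reducing it to a one-dimensional central limit theorem and treating the degenerate and non-degenerate directions separately. Fix $a\in\R^p$ and set $W_n=a^\top k_n^{-1/2}S_n=k_n^{-1/2}\sum_{k\in\mathcal K_n}a^\top Y_{n,k}$. Since the $Y_{n,k}$ are centred and by Assumption~(III),
\[
\Var(W_n)=a^\top\big(k_n^{-1}\Sigma_n\big)a\longrightarrow a^\top\Sigma a=:s^2\geq 0 .
\]
By Cramér--Wold it suffices to show $W_n\to\mathcal N(0,s^2)$ in distribution for every $a$; then $k_n^{-1/2}S_n\to\mathcal N(0,\Sigma)$ follows, the point being that when $\Sigma$ is only positive semi-definite the directions with $s^2=0$ must contribute the Dirac mass $\mathcal N(0,0)$ at the origin, which is exactly what Cramér--Wold requires here.

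The degenerate case is immediate: if $s^2=0$ then $\Var(W_n)\to 0$ and $\E W_n=0$, so $W_n\to 0$ in probability by Chebyshev, hence in distribution to $\mathcal N(0,0)$. For a direction with $s^2>0$ I would introduce the bounded, centred scalar triangular array $T_{n,k}=a^\top Y_{n,k}$: uniform boundedness follows from Assumption~(I) and Cauchy--Schwarz; each $T_{n,k}$ is $\sigma(X_k)$-measurable, so the mixing coefficients of $(T_{n,k})$ are dominated by those of $(X_k)$ and Assumption~(II) carries over unchanged; and $k_n^{-1}\Var\big(\sum_{k\in\mathcal K_n}T_{n,k}\big)\to s^2>0$ by Assumption~(III). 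One then concludes $k_n^{-1/2}\sum_{k\in\mathcal K_n}T_{n,k}\to\mathcal N(0,s^2)$ via a central limit theorem for bounded $\alpha$-mixing random fields over expanding index sets, obtained by the Bernstein blocking technique of \citet{ibragimov:linnik:71} in its spatial form, in the spirit of \citet[Theorem~3.3.1]{guyon:91} and \citet[Theorem~1]{karaczony:06}.

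The main obstacle is that those references are stated for genuinely stationary random fields, whereas here $(Y_{n,k})$ is a triangular array whose summands arise from the stationary field $(X_k)$ through functions $f_{n,k}$ that may depend on both $n$ and $k$, summed over a possibly irregular set $\mathcal K_n$. I would therefore carry out the blocking argument directly: partition $\mathcal K_n$ into large cubic blocks separated by corridors of slowly growing width, bound the corridor contribution to the variance using Assumption~(III) together with the covariance decay implied by~(II), show that the characteristic function of the vector of block sums factorizes asymptotically by peeling off one block at a time against the rest (for which only the coefficient $\alpha_{2,\infty}$ is needed) with errors controlled by $\alpha_{2,\infty}(m)=\mathcal O(m^{-d(1+\eta)})$ and uniform boundedness, and verify the Lindeberg condition, which is automatic once the block sizes are chosen so that each block sum is $o(\sqrt{k_n})$ uniformly. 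The only genuinely technical point is the bookkeeping relating the corridor width, the number of blocks and the mixing rate; everything else is routine given Assumptions~(I)--(III).
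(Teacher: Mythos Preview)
Your approach is correct and shares the paper's overall architecture: Cram\'er--Wold plus a Chebyshev argument for the degenerate directions. The organizational difference is that the paper first diagonalizes $\Sigma$ and projects $S_n$ onto the $r$-dimensional image of $\Sigma$, obtaining a sub-vector $H_{Im}^\top S_n$ whose normalized covariance converges to the \emph{positive definite} matrix $H_{Im}^\top\Sigma H_{Im}$; it then applies \citet[Theorem~1]{karaczony:06} once to that $r$-dimensional array, and handles the kernel component exactly as you do (variance $\to 0$, hence convergence to $0$ in probability) before gluing the two pieces together with Slutsky. You instead stay one-dimensional throughout, splitting according to whether $a^\top\Sigma a$ vanishes; this is equivalent since, for $\Sigma\ge 0$, $a^\top\Sigma a=0$ iff $a\in\ker\Sigma$.

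The practical difference is the non-degenerate step. You flag the triangular-array issue and propose to redo the Bernstein blocking by hand; the paper sidesteps this entirely by observing that once the limiting (projected) covariance is strictly positive definite, \citet[Theorem~1]{karaczony:06} already covers bounded triangular arrays built from a stationary mixing field, so a single citation replaces your whole last paragraph. What your route buys is self-containment and avoiding the eigenbasis bookkeeping; what the paper's route buys is brevity, since the heavy lifting is delegated to an existing result rather than reproved.
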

We present Theorem~\ref{thm:degenere} for bounded random vectors and with only one mixing coefficient, namely $\alpha_{2,\infty}$. It can obviously be generalized along similar lines as in~\citet[Theorem 3.3.1]{guyon:91}.

\begin{proof}
Assume $\Sigma>0$, then for $n$ large enough $k_n^{-1} \Sigma_n \geq \Sigma/2>0$, which combined with Assumptions (I)-(II) allows us to apply~\citet[Theorem 1]{karaczony:06} to conclude the result.

The end of the proof follows the same arguments as the proof of a central limit theorem for triangular arrays of conditionally centered random fields obtained by~\citet[Theorem 2]{coeurjolly:lavancier:13}. If $\Sigma$ is not positive definite, we can find an orthonormal basis  $(h_1,\dots,h_p)$ of $\R^p$ where the $f_i$'s are eigenvectors of $\Sigma$. We let $(f_1,\dots,f_r)$ be the basis of the image of $\Sigma$ and $(f_{r+1},\dots,f_p)$ be the basis of its kernel. Let also $H_{Im}$ (resp. $H_{Ker}$) be the matrix formed by the column vectors of $(f_1,\dots,f_r)$ (resp. $(f_{r+1},\dots,f_p)$). Similarly for $v\in \R^p$, we denote by $v_j$ its $j$th coordinate in the basis of $(f_1,\dots,f_p)$, $v_{Im}=(v_1,\dots,v_r)$ and $v_{Ker}=(v_{r+1},\dots,v_p)$. Using the Cramer-Wold device, we need to prove that for any $v\in \R^p$, $v^\top k_n^{-1/2} S_n$ converges towards a Gaussian random variable. We have
\[
  v^\top k_n^{-1/2} S_n  = v_{Im}^\top H_{Im}^\top k_n^{-1/2}S_n
 + v_{Ker}^\top H_{Ker}^\top k_n^{-1/2}S_n.
 \]
Let $S_n^\prime=\sum_k Y_{n,k}^\prime$ where $Y_{n,k}^\prime=H_{Im}^\top Y_{n,k}$. The random variables $Y_{n,k}^\prime$ are bounded variables for any $n\geq 1$ and  $k\in \mathcal K_n$. By assumption (III), $k_n^{-1}\Var(S_n^\prime)\to H_{Im}^\top \Sigma H_{Im}$ which is a positive definite matrix since $r\geq 1$. Therefore from the first part of the proof, $v_{Im}^\top H_{Im}^\top k_n^{-1/2}S_n$ tends to a Gaussian random variable in distribution as $n\to \infty$. By Slutsky's Lemma (see e.g. \citet{van:00}), the proof will be done if $v_{Ker}^\top H_{Ker}^\top k_n^{-1/2}S_n$ tends to 0 in probability as $n\to \infty$. Since, $ H_{Ker}^\top \Sigma H_{Ker} =0$, the expected convergence follows from
\begin{align*}
\Var (v_{Ker}^\top H_{Ker}^\top k_n^{-1/2}S_n) &= v_{Ker}^\top H_{Ker}^\top k_n^{-1} \Sigma_n H_{Ker} v_{Ker} \\
&= v_{Ker}^\top H_{Ker}^\top (k_n^{-1} \Sigma_n-\Sigma) H_{Ker} v_{Ker} \\
&\leq \|v_{Ker}\| \,\|H_{Ker}\|\, \|k_n^{-1} \Sigma_n -\Sigma\| 
\end{align*}
which tends to 0 by Assumption~(III).
\end{proof}

\subsection{Proof of Proposition~\ref{prop:iiCox}} \label{sec:proofiiCox}

\begin{proof}
{\it Assumption (ii-1)} corresponds to Proposition~\ref{prop:MeZcheckCox}.\\

{\it Assumptions (ii-2) and (ii-3)}. 
By definition of $\bX$,
\[
  \sqrt{2\pi \lambda c_n} \P(N(C_{n,0})=\lfloor t_n \rfloor \mid \xi) = \frac{\left(\int_{C_{n,0}} \xi(s)\dd s\right)^{\lfloor t_n \rfloor}e^{-\int_{C_{n,0}}\xi(s) \dd s}}{\lfloor t_n \rfloor^{\lfloor t_n \rfloor} e^{-\lfloor t_n \rfloor}} \;v_n
 \]
 where 
 \[
   v_n= \sqrt{\frac{\lambda c_n}{\lfloor t_n \rfloor}} \frac{\sqrt{2\pi}\lfloor t_n \rfloor ^{\lfloor t_n \rfloor +1/2} e^{-\lfloor t_n \rfloor}}{\lfloor t_n \rfloor !}.
 \]
Since $t_n/(\lambda c_n)\to 1$ as $n\to \infty$, then using Stirling's Formula we obviously have $v_n\to 1$ as $n\to \infty$. Now using the notation $T_n=\lfloor t_n \rfloor^{-1}\int_{C_{n,0}}\xi(s)\dd s$, we rewrite the first equation as follows
\[
(v_n)^{-1} \sqrt{2\pi \lambda c_n} \P(N(C_{n,0})=\lfloor t_n \rfloor \mid \xi) = T_n^{\lfloor t_n \rfloor} e^{\lfloor t_n \rfloor (1-T_n)} = A_n B_n  
\]
where $A_n$ and $B_n$ are defined by
\[
A_n = e^{-{\lfloor t_n \rfloor}(T_n-1)^2/2 } \quad \mbox{ and }
B_n = e^{\lfloor t_n \rfloor \left( \log T_n - (T_n-1)+(T_n-1)^2/2 \right)  }.
\]
Let $\eta$ be given by Assumption~(iv).
Since $\E |\xi(0)|^{2+\delta} <\infty$ for some $\delta>2/\eta$, we are ensured that $\alpha_{2,\infty}= \mathcal O(m^{-\nu})$ for some $\nu>d(2+\delta)/\delta$. Therefore, we can apply~\citet[Theorem 3.3.1]{guyon:91} and show that there exists $\tau>0$ such that $\sqrt{\lambda c_n} (I_n-1)\to \mathcal N(0,\tau^2)$ in distribution where $I_n=(\lambda c_n)^{-1} \int_{C_{n,0}}\xi(s)\dd s$. To compute $\tau^2$, we observe that using the definition of a Cox point process
\begin{align*}
\Var(N(C_{n,0})) &= \E \left( \Var(N({C_{n,0}}) \mid \xi)\right) + \Var\left( \E \left( N({C_{n,0}})\right) \mid \xi \right) \\
&= \lambda c_n + \Var\int_{C_{n,0}} \xi(s)\dd s.
\end{align*}
We use Assumption (iii) and Lemma~\ref{lem:cov} (a) to deduce that as $n\to \infty$
\[
  \Var\int_{C_{n,0}} \xi(s)\dd s  \sim \lambda^2 c_n\int_{\R^d}(g(w)-1)\dd w  
\]
which leads to $\Var( \sqrt{\lambda c_n} I_n) \to  \tau^2=\lambda \int_{\R^d}(g(w)-1)\dd w$ as $n\to \infty$. From the definition of $t_n$ and  Slutsky's Lemma, it can be shown that $\sqrt{\lfloor t_n \rfloor} (T_n-I_n)\to 0$ in probability which leads to $T_n \to 1$ in probability and $\sqrt{\lfloor t_n \rfloor}(T_n-1)\to\mathcal N(0,\tau^2)$ in distribution. We deduce that $A_n \to 
A=e^{-\tau^2 L^2/2}$ in distribution, where $L\sim \mathcal N(0,1)$, which, by the uniform integrability of the sequence $(A_n)_n$,  leads to $A_n \to A$ in $L^1$. Now a Taylor expansion shows that there exists $\widetilde T_n \in (0\wedge (T_n-1), 0\vee (T_n-1))$ such that
\[
  |\log(B_n)| = \lfloor t_n \rfloor |T_n-1| \, 
  \frac{\widetilde T_n^2}{1+\widetilde T_n} \leq \lfloor t_n \rfloor (T_n-1)^2 \frac{|T_n-1|}{\widetilde T_n+1}.
\]
It is clear that $\widetilde T_n$ tends to 0 in probability as $n\to \infty$, which yields that $\log(B_n) \to 0$ and $B_n\to 1$ in probability by Slutsky's Lemma. Again, the uniform integrability assumption of the sequence $(B_n)_n$ implies that $B_n\to 1$ in $L^1$. Since 
\[
  |A_n B_n -A| \leq A_n |B_n-1| + |A_n-A| \leq |B_n-1| + |A_n-A|
\] we conclude that $A_n B_n \to A$ in $L^1$ as $n\to \infty$. In other words as $n\to \infty$
\[
  \sqrt{2\pi \lambda c_n} \P(N(C_{n,0})=\lfloor t_n\rfloor ) \sim v_n^{-1} \E \left(
\sqrt{2\pi \lambda c_n} \P(N(C_{n,0})=\lfloor t_n\rfloor \mid \xi ) 
   \right) \to \E(A).
\]
Using the definition of the moment generating function of a $\chi_1^2$ distribution, we have $\E(A) = (1+\tau^2)^{-1/2}$ whereby we deduce that
\[
  \sqrt{c_n} \P(N(C_{n,0})=\lfloor t_n\rfloor ) \to \left( 2\pi \lambda(1+\tau^2)\right)^{-1/2} = \left( 2\pi \sigma^2 \right)^{-1/2}
\]
with $\sigma^2=\lambda + \lambda^2 \int_{\R^d}(g(w)-1)\dd w$.
\end{proof}

\subsection{Proof of Theorem~\ref{thm:cltF}}

\begin{proof}
We focus only on (a) as (b) follows from (a), Slutsky's Lemma and Assumption (ii-2). Let $t_n=\lambda c_n+a_n$. By definition
\begin{equation}
  \label{eq:sum}
  \Fe(t_n ;\bZ) -F_Z(t_n) = \frac1{k_n} \sum_{k\in \mathcal K_n} \big(\1(Z_{n,k}\leq t_n) - \P(Z_{n,k}\leq t_n) \big).    
\end{equation}
Let $(\varepsilon_n)_{n\geq 1}$ be a sequence of real numbers such that $\varepsilon_n\to 0$ and $\varepsilon_n c_n^{1/d}\to \infty$ as $n\to \infty$.We denote by $Z_{n,k}^-=N( C_{n,k}^-)+U_k$ where $C_{n,k}^-$ is the erosion of the cell $C_{n,k}$ by a closed ball with radius $\varepsilon_n c_n^{1/d}$. Two cells $C_{n,k}^-$ and $C_{n,k^\prime}^-$ for $k,k^\prime\in \mathcal K_n$ ($k\neq k^\prime$) are therefore at distance greater than $2\varepsilon_n c_n^{1/d}$. To prove Theorem~\ref{thm:cltF} (a), we use the blocking technique introduced by~\citet{ibragimov:linnik:71} and applied to spatial point processes by \cite{guan:loh:07,guan:sherman:calvin:07} and \citet{prokevsova:jensen:13}.
To this end, we need additional notation. For any $n\geq 1$ and  $k\in \mathcal K_n$, let $t_n^-=\lambda |C_{n,k}^-|+1/2= \lambda (1-\varepsilon_n)^d c_n +1/2$ and let $(\widetilde Z_{n,k}^-, k\in \mathcal K_n)$ be a collection of independent random variables such that $\widetilde Z_{n,k}^- \stackrel{d}{=}Z_{n,k}^-$. We decompose the sum in~\eqref{eq:sum} as follows
\begin{equation}\label{eq:sum2}
  \sum_{k\in \mathcal K_n} \big(\1(Z_{n,k}\leq t_n) - \P(Z_{n,k}\leq t_n) \big) = D_n + S_n^- + \widetilde S_n^-
\end{equation}
where
\begin{align*}
D_n &= \sum_{k \in \mathcal K_n} D_{n,k} =
\sum_{k \in \mathcal K_n} \big\{\1(Z_{n,k}\leq t_n) - \P(Z_{n,k}\leq t_n) - \1(Z_{n,k}^-\leq t_n^-)+\P(Z_{n,k}^-\leq t_n^-) \big\}\\
S_n^- &= \sum_{k\in \mathcal K_n} \1(Z_{n,k}^-\leq  t_n^-)-\P(Z_{n,k}^-\leq t_n^-)
\\
  \widetilde S_n^- &= \sum_{k\in \mathcal K_n} \1(\widetilde Z_{n,k}^-\leq t_n^-)-\P(\widetilde Z_{n,k}^-\leq t_n^-).
\end{align*}
 We split the proof into three steps. As $n\to \infty$, we prove that

\noindent{\it Step 1.} $ D_n / \sqrt{k_n} \to 0$ in probability.

\noindent{\it Step 2.}  for any $u \in \R$, $\phi_n^-(u)-\widetilde \phi_n^-(u)\to 0$ as $n\to \infty$ where $i=\sqrt{-1}$, $\phi_n^-(u) =\E(e^{i uS_n^-/\sqrt{k_n}})$
and $\widetilde\phi_n^-(u) =\E(e^{i u \widetilde S_n^-/\sqrt{k_n}})$, which will imply that
$(S_n^- - \widetilde{S}_n^- ) /\sqrt{k_n} \to 0$.

\noindent{\it Step 3.} $\widetilde S_{n}^- /\sqrt{k_n}\to \mathcal N(0,1/4)$ in distribution.\\

The conclusion will follow directly from Steps 1-3, \eqref{eq:sum}-\eqref{eq:sum2} and Slutsky's Lemma.\\

\noindent{\it Step 1.}  To achieve this step, we will prove that $k_n^{-1} \E(D_n^2)=k_n^{-1}\Var(D_n)\to 0$ as $n \to \infty$. We have
\[
  \frac1{k_n} \Var(D_n) = \frac1{k_n} \sum_{\begin{subarray}{c}
k,k^\prime \in \mathcal K_n \\ |k-k^\prime| \leq 1    
  \end{subarray}
  } \Cov(D_{n,k},D_{n,k^\prime}) +
  \frac1{k_n} \sum_{\begin{subarray}{c}
k,k^\prime \in \mathcal K_n \\ |k-k^\prime| > 1    
  \end{subarray}
  } \Cov(D_{n,k},D_{n,k^\prime}) .
\]
Let $k,k^\prime\in \mathcal K_n$  with $k\neq k^\prime$, Assumption~(i) asserts that $d(C_{n,k},C_{n,k^\prime})  = |k-k^\prime-1| c_n^{1/d}$. Since $D_{n,k}\in \mathcal F(C_{n,k})$ and $D_{n,k^\prime}\in \mathcal F(C_{n,k^\prime})$, we have from \citet[Lemma 2.1]{zhengyan:96}
\begin{align*}
  \Cov(D_{n,k},D_{n,k^\prime}) &\leq 4 \alpha_{c_n,c_n} (|k-k^\prime-1|c_n^{1/d}) \\
  &\leq 4c_n \alpha(|k-k^\prime-1|c_n^{1/d}) = \mathcal O(|k-k^\prime-1|^{-d(1+\eta)} c_n^{-\eta}).
\end{align*}
Since the series $\sum_{k\in \Z^d\setminus\{0\}} |k|^{-d(1+\eta)}$ converges, it is clear that
\begin{equation}\label{eq:covRight}
   \frac1{k_n} \sum_{\begin{subarray}{c}
k,k^\prime \in \mathcal K_n \\ |k-k^\prime| > 1    
  \end{subarray}
  } \Cov(D_{n,k},D_{n,k^\prime}) = \mathcal O(c_n^{-\eta}),
\end{equation}
which tends to 0 as $n\to \infty$. Since the variables $D_{n,k}$ are identically distributed, we get from the Cauchy-Schwarz inequality 
\begin{align*}
\Big| \frac1{k_n} \sum_{\begin{subarray}{c}
k,k^\prime \in \mathcal K_n \\ |k-k^\prime| \leq 1    
  \end{subarray}
  } \Cov(D_{n,k},D_{n,k^\prime})\Big| &\leq 
  \frac1{k_n} \sum_{\begin{subarray}{c}
k,k^\prime \in \mathcal K_n \\ |k-k^\prime| \leq 1    
  \end{subarray}
  } \sqrt{ \Var(D_{n,k}) \Var(D_{n,k^\prime})} \\
  &\leq \Var(D_{n,0}) \frac1{k_n} \sum_{\begin{subarray}{c}
k,k^\prime \in \mathcal K_n \\ |k-k^\prime| \leq 1    
  \end{subarray}
  }1 \\ &\leq 3^d \Var(D_{n,0}).
\end{align*}
Thus, Step 1 is achieved once we prove that $\Var (D_{n,0}) \to 0$ as $n\to \infty$.
A straightforward calculation yields that
\begin{align*}
\Var(D_{n,0})&= \P(Z_{n,0}\leq t_n)(1-\P(Z_{n,0}\leq t_n)) +
\P(Z_{n,0}^-\leq t_n^-)(1-\P(Z_{n,0}^-\leq t_n^-)) \\
&\qquad + 2\P (Z_{n,0}\leq t_n) \P(Z_{n,0}^-\leq t_n^-) - 2 \P\big( Z_{n,0}\leq t_n, Z_{n,0}^-\leq t_n^-\big).
\end{align*}
Let $\Delta_j$ be the unit cube centered at $j\in \Z^d$ and let $\mathcal J_n=\{j\in \Z^d: \Delta_j\cap C_{n,0} \neq \emptyset\}$. We denote by $Y_{n,j}$ the  random vector
\[
  Y_{n,j} = \left( \frac{U_{0}}{j_n} + \1(u\in  C_{n,0}\cap \Delta_j) , \frac{U_{0}}{j_n} + \1(u\in  C_{n,0}^-\cap \Delta_j) \right)^\top
\]
where $j_n=|\mathcal J_n|$ satisfies $j_n\sim c_n$ as $n\to\infty$. We have $(Z_{n,0},Z_{n,0}^-)^\top =\sum_{j\in \mathcal J_n}Y_{n,j}$ and we note that $\sup_{n\geq 1}\sup_{j\in \mathcal J_n} \|Y_{n,j}\|_\infty<\infty$.
Since $\varepsilon_n\to 0$ and $c_n^{1/d}\varepsilon_n\to \infty$ as $n\to \infty$, we can apply Lemma~\ref{lem:cov} (c) to derive 
\[
  \Var(Z_{n,0}) \sim \Var(Z_{n,0}^-) \sim \Cov(Z_{n,0},Z_{n,0}^-)\sim \frac1{12} + \sigma^2 c_n
\]
where $\sigma^2=\lambda +\lambda^2\int_{\R^d} (g(w)-1)\dd w$. In other words,
\begin{equation}\label{eq:convZn0}
  j_n^{-1}\Var( (Z_{n,0},Z_{n,0}^-)^\top) \to \Sigma = \sigma^2 \left( 
\begin{array}{cc} 
1&1\\1&1  
\end{array}
  \right),
\end{equation}
which is a matrix with rank 1. By combining this with Assumption~(iv), we can apply Theorem~\ref{thm:degenere} to get as $n\to \infty$
\[
  c_n^{-1/2} \left(Z_{n,0}-\E(Z_{n,0}), Z_{n,0}^- -\E(Z_{n,0}^-) \right)^\top \to \mathcal N(0,\Sigma)
\]
in distribution. Since $t_n^-=\E(Z_{n,0}^-)$ and $\E(Z_{n,0})-t_n=1/2-a_n=o(c_n^{1/2})$ by definition of $t_n$, an application of Slutsky's Lemma yields that
\[
  c_n^{-1/2} \left(Z_{n,0}-t_n, Z_{n,0}^- -t_n^-) \right)^\top \to \mathcal N(0,\Sigma)
\]
in distribution as $n\to\infty$ whereby we deduce that 
\begin{equation}\label{eq:convtn}
  \P( Z_{n,0}\leq t_n) \to 1/2
\quad \mbox{ and } \quad
  \P(Z_{n,0}^-\leq t_n^- ) \to 1/2.
\end{equation}
\citet{rose:smith:96} proved that if $U=(U_1,U_2)^\top$ follows a bivariate normal distribution with mean 0, variance 1 and  correlation $\rho$, $\P(U_1\leq 0,U_2\leq 0)=1/4+\sin^{-1}(\rho)/2\pi$ which equals to 1/2 when $\rho=1$. 
From~\eqref{eq:convZn0}, this shows that $\P( Z_{n,0}\leq t_n, Z_{n,0}^-\leq t_n^-) \to 1/2$ as $n\to \infty$. As a consequence, $\Var(D_{n,0})\to 0$ which combined with~\eqref{eq:covRight} leads to $k_n^{-1}\Var(D_n)\to 0$ as $n\to \infty$.\\

\noindent{\it Step 2.} This step is the core of the blocking technique.  
Let $h$ denote a bijection from $\mathcal K_n$  to $\{1,\dots,k_n\}$.
Let $j\in \{1,\dots,k_n\}$ and $V_j=e^{i u(\1(Z_{n,h^{-1}(j)}\leq t_n^-) -\P(Z_{n,h^{-1}(j)}\leq t_n^-) )/\sqrt{k_n}}$. Then
\[
  \phi_n^-(u) = \E  \prod_{j=1}^{k_n} V_j
  \quad \mbox{ and } \quad 
  \widetilde \phi_n^-(u) = \prod_{j=1}^{k_n} \E(V_j).
\] 
and 
\[
  |\phi_n^-(u) -\widetilde \phi_n^-(u)| \leq \sum_{j=1}^{k_n-1} \big|
\E \big(\prod_{s=1}^{j+1} V_s \big)- \E \big(\prod_{s=1}^j V_s\big) \E(V_{j+1}) \big|.
\]
Let $j \in\{1,\dots,k_n-1\}$ and $A_j=\prod_{s=1}^j V_s$. Clearly, $A_j\in \mathcal F(\cup_{s=1}^j C^-_{n,h^{-1}(s)})$ and $V_{j+1} \in \mathcal F(C^-_{n,h^{-1}(j+1)})$, $|\cup_{s=1}^j C^-_{n,h^{-1}(s)}|=j (1-\varepsilon_n)^d c_n$, $|C^-_{n,h^{-1}(j+1)}|=(1-\varepsilon_n)^d c_n$ and $d(\cup_{s=1}^j C^-_{n,h^{-1}(s)},C^-_{n,h^{-1}(j+1)}) \geq 2 \varepsilon_n c_n^{1/d}$. Since $A_j$ and $V_{j+1}$ are bounded random variables, we have the following upper-bound on their covariance by means of the strong mixing coefficient, see \citet[Lemma 2.1]{zhengyan:96}
\begin{align*}
\Cov(A_j,V_{j+1}) &\leq 4 \alpha_{j(1-\varepsilon_n)^d c_n,(1-\varepsilon_n)^d c_n}(2\varepsilon_n c_n^{1/d}) \\
&\leq 4jc_n \sup_p \frac{\alpha_{p,p}(2\varepsilon_n c_n^{1/d})}p \\
&\leq 4 c_n k_n \mathcal O(\varepsilon_n^{-d(1+\eta)} c_n^{-(1+\eta)}) =\mathcal O(k_n \varepsilon_n^{-d(1+\eta)} c_n^{-\eta}) 
\end{align*}
whereby we deduce that $|\phi_n^-(u) -\widetilde \phi_n^-(u)|=\mathcal O(k_n^2 c_n^{-\eta} \varepsilon_n^{-d(1+\eta)})$. Now we can fix the sequence $(\varepsilon_n)_{n\geq 1}$. Specifically, we set $\varepsilon_n=c_n^{(\eta^\prime-\eta)/d(1+\eta)}$ for some $0<\eta^\prime<\eta$. This choice ensures that $\varepsilon_n\to 0$, $c_n^{1/d}\varepsilon_n=c_n^{(1+\eta^\prime)/d(1+\eta)}\to \infty$  and yields that
$
  |\phi_n^-(u) -\widetilde \phi_n^-(u)|=\mathcal O({k_n^2}/{c_n^{\eta^\prime}}  )
$
which tends to 0 as $n\to \infty$ by Assumption (i).\\

\noindent{\it Step 3.} Since $\widetilde Z_{n,k}^-\stackrel{d}{=} Z_{n,k}^-$ and since $\P(Z_{n,k}^- \leq t_n^-)\to 1/2$ as $n\to \infty$ from Step 2, we deduce that
\begin{align*}
\Var(\widetilde S_n^-) &= \sum_{k\in \mathcal K_n} \P(\widetilde Z_{n,k}^- \leq t_n^-) (1-\P(\widetilde Z_{n,k}^- \leq t_n^-)) \\
&= k_n\P(\widetilde Z_{n,0}^- \leq t_n^-) (1-\P(\widetilde Z_{n,0}^- \leq t_n^-))  \sim k_n/4
\end{align*}
as $n\to \infty$. Since $(\1(\widetilde Z_{n,k}^- \leq t_n^-), k\in \mathcal K_n)$ is a collection of bounded and independent random variables, Step 3 follows from an application of Lyapounov Theorem.
\end{proof}


\subsection{Proof of Theorem~\ref{thm:bahadur}}

\begin{proof}
(a) Let us define for any $t\geq 0$
\[
  A_n = \sqrt{\frac{k_n}{c_n}} \left( \Mee(\bZ)-\Me_Z\right) 
  \quad \mbox{ and } \quad
  B_n(t) = \sqrt{\frac{k_n}{c_n}} \left(
  \frac{F_Z(t)-\Fe(t;\bZ)}{f_Z(\Me_Z)}
  \right).
\]
We  have to prove that $A_n-B_n(\Me_Z)$
converges in probability to 0 as $n\to\infty$. The proof is based on
the application of \citet[Lemma~1]{ghosh:71} which consists in
satisfying  the two following conditions:\\
(I) for all $\delta>0$, there exists $\varepsilon=\varepsilon(\delta)$ such that $\P(|B_n(\Me_Z)|>\varepsilon)< \delta$.\\
(II) for all $y\in \R$ and $\varepsilon>0$
\[
\lim_{n\to \infty}\P(A_n\leq y, B_n(\Me_Z)\geq y+\varepsilon) =
\lim_{n\to \infty}\P(A_n\geq y+\varepsilon, B_n(\Me_Z)\leq y)=0.  
\]

In particular \noindent(I) is  fulfilled if we prove that $\Var
B_n(\Me_Z)=\mathcal{O}(1)$. The proof of Theorem~\ref{thm:cltF} shows that $\Var \Fe(\Me_Z ; \bZ) = \mathcal O(k_n^{-1})$ as $n\to \infty$. By Assumption~(ii-3), we obtain
\[
  \Var B_n(\Me_Z) = \frac{1}{c_n f_Z(\Me_Z)^2} \Var( \sqrt{k_n} \Fe(\Me_Z;\bZ)) = \mathcal O(1).
\]
(II) Let $y\in \R$ (and without loss of generality, assume $y\geq 0$). By definition of the sample median, we have
\begin{align*}
\left\{ A_n\leq y\right\} &= \left\{ \Mee(\bZ) \leq \Me_Z+y \sqrt{{c_n}/{k_n}} \right\} \\
&= \left\{ \frac12 \leq \Fe\left( \Me_Z+y \sqrt{{c_n}/{k_n}} \,\right)\right\} \\
&= \left\{ B_n \left( \Me_Z+y \sqrt{{c_n}/{k_n}}\right) \leq y_n\right\}
\end{align*}
where 
\[
  y_n = \sqrt{{k_n}/{c_n}} \,\frac1{f_Z(\Me_Z)} \left( F_Z\left(
 \Me_Z+y \sqrt{{c_n}/{k_n}}
 \right) - F_Z(\Me_Z) \right).
\]
Now, we  intend to prove that as $n\to \infty$, $y_n\to y$ and $\widetilde B_n=B_n(\Me_Z+y \sqrt{c_n/k_n})-B_n(\Me_Z)\to 0$ in probability. First, since $Z$ admits a density everywhere, there exists $\tau_n \in (\Me_Z,\Me_Z+y\sqrt{c_n/k_n})$ such that $y_n = y \; {f_Z(\tau_n)}/{f_Z(\Me_Z)}$.
From~\eqref{eq:fZ}
\begin{align*}
  \frac{f_Z(\tau_n)}{f_Z(\Me_Z)} &= \frac{\P(N(C_{n,0})=\lfloor \tau_n \rfloor)}{\P(N(C_{n,0})=\lfloor \Me_Z \rfloor)} ,
\end{align*}
which tends to 1 by Assumptions (ii-1)-(ii-2) and implies the convergence of $y_n$ towards $y$. Second, we show that $\Var(\widetilde B_n)\to 0$ as $n\to \infty$ by decomposing the variance as follows. Let $\widetilde B_{n,k} = \1(\Me_Z\leq Z_{n,k} \leq \Me_Z + y\sqrt{c_n/k_n}) - \P(\Me_Z\leq Z_{n,k} \leq \Me_Z + y\sqrt{c_n/k_n})$

\begin{align}
\Var(\widetilde B_n)&= \frac1{c_n f_Z(\Me_Z)^2} \, \frac1{k_n} \sum_{k,k^\prime\in \mathcal K_n} \Cov(\widetilde B_{n,k},\widetilde B_{n,k^\prime}) \nonumber\\
&\leq \frac{\kappa}{k_n} \sum_{\begin{subarray}{c}
k,k^\prime \in \mathcal K_n \\ |k-k^\prime|\leq 1
\end{subarray}} |\Cov(\widetilde B_{n,k},\widetilde B_{n,k^\prime})| +  \frac{\kappa}{k_n} \sum_{\begin{subarray}{c}
k,k^\prime \in \mathcal K_n \\ |k-k^\prime|> 1
\end{subarray}} |\Cov(\widetilde B_{n,k},\widetilde B_{n,k^\prime})| .\label{eq:covUn}
\end{align}
We follow the proof of Step 1 of Theorem~\ref{thm:cltF}. For any $k,k^\prime\in \mathcal K_n$ $k\neq k^\prime$, $\Cov(\widetilde B_{n,k},\widetilde B_{n,k^\prime})=\mathcal O(|k-k^\prime-1|^{-d(1+\eta)}c_n^{-\eta})$. So
\[
  \frac{1}{k_n} \sum_{\begin{subarray}{c}
k,k^\prime \in \mathcal K_n \\ |k-k^\prime|> 1
\end{subarray}} |\Cov(\widetilde B_{n,k},\widetilde B_{n,k^\prime})| = \mathcal O(c_n^{-\eta})
\]
which tends to 0 as $n\to \infty$. The first double sum in~\eqref{eq:covUn} is upper-bounded by $3^d \kappa \Var(\widetilde B_{n,0})$ and
\[
  \Var(\widetilde B_{n,0}) = \P(\Me_Z\leq Z_{n,0} \leq \Me_Z + y\sqrt{c_n/k_n}) \big(1-\P(\Me_Z\leq Z_{n,0} \leq \Me_Z + y\sqrt{c_n/k_n})\big).
\]
By Assumption (i)-(ii), $\Me_Z=\lambda c_n +o(\sqrt{c_n})$ and $\Me_Z+y\sqrt{c_n/k_n} = \lambda c_n+o(\sqrt{c_n})$ for every $y\in \R$. So we can apply~\eqref{eq:convtn} which leads to $\P(Z_{n,0} \geq \Me_Z)\to 1/2$, $\P(Z_{n,0}\leq \Me_Z+y\sqrt{c_n/k_n})\to 1/2$ and finally to $\Var(\widetilde B_{n,0})\to 0$ and  $\widetilde B_n\to 0$ in probability as $n\to \infty$.\\

Now we can  conclude. For all $\varepsilon>0$, there exists $n_0(\varepsilon)$ such that for
all $n\geq n_0(\varepsilon)$, $y_n\leq y + \varepsilon/2$.
Therefore for $n\geq n_0(\varepsilon)$
\begin{align*}
\P(A_n\leq y ,\; B_n(\Me_Z)\geq y&+\varepsilon)=\P(B_n(\Me_Z+y \sqrt{c_n/k_n}) \leq y_n,\; B_n(\Me_Z) \geq y+\varepsilon) \\
&\leq  \P(B_n(\Me_Z+ y \sqrt{c_n/k_n})\leq y\,{+}\,\varepsilon/2, B_n(\Me_Z)\geq y+\varepsilon) \\
 &\leq\P \left( \left| B_n(\Me_Z+y\sqrt{c_n/k_n} ) -B_n(\Me_Z)\right|\geq \varepsilon/2\right)  \\
 &\leq \P(|\widetilde B_n| \geq \varepsilon/2)
\end{align*}
which tends to 0 as $n\to \infty$ and (II) is proved.\\

\noindent(b) It is sufficient to combine Theorem~\ref{thm:cltF} (b) and Theorem~\ref{thm:bahadur} (a). From Slutsky's Lemma and by Assumptions (ii-2)-(ii-3), the following convergence in distribution holds as $n\to \infty$
\[
  \sqrt{k_n/c_n}  s_n\left( \Mee(\bZ) -\Me_Z\right) \to \mathcal N(0,1/4)
\]
where $s_n=\sqrt{c_n}\P(N(C_{n,0})=\lfloor \lambda c_n\rfloor)$. Since $\Mee(\bZ)=c_n\Mee(\check\bZ)$, $\Me_{ Z}=c_n\Me_{\check Z}$ and $|W_n|=k_n c_n$, this can be rewritten as 
\[
  |W_n|^{1/2} s_n \left( \Mee (\check\bZ) -\Me_{\check Z}\right) \to \mathcal N(0,1/4).
\]
From \eqref{eq:modifiedEst} and by Assumptions (i)-(ii), $\Me_{\check Z}=\lambda + \mathcal O(c_n^{\ell-1})$ and 
$\sqrt{k_n c_n} c_n^{\ell-1}\to 0$ as $n\to \infty$. Hence, a last application of Slutsky's Lemma concludes the proof.
\end{proof}

\section{Additional comments} \label{app:supplementary}


\subsection{The way of jittering a sample of counts}

We could think of slightly  generalizing~\eqref{eq:Znk}  and introduce the jittering effect as
\begin{equation*}\label{eq:Znkphi}
  Z_{n,k}  = N({C_{n,k}}) + \varphi^{-1}(U_k ) 
\end{equation*}
for any $k\in \mathcal K_n$, 
where $\varphi:[0,1]\to [0,1]$ is a continuously differentiable increasing function. 
The cumulative distribution function of $Z$ would be in that case
\[
  F_{Z}(t) = P(N(C_{n,0})\leq \lfloor t \rfloor -1) + P(N(C_{n,0})=\lfloor t\rfloor ) \,\varphi(t-\lfloor t \rfloor)
\]
and for any $t \notin \N$, $Z$ would admit a density $f_Z$ at $t$ given by
\begin{equation*}\label{eq:fZphi}
  f_Z(t) = P(N(C_{n,0})=\lfloor t\rfloor ) \, \varphi^\prime(t-\lfloor t \rfloor).
\end{equation*}
When $t\in \N$, since $(F_Z(t+h)-F_Z(t))/h$ tends to $\P(N(C_{n,0})=\lfloor t\rfloor)\varphi^\prime(0)$ when $h\to 0^+$ and to $\P(N(C_{n,0})=\lfloor t\rfloor)\varphi^\prime(1)$ when $h\to 0^-$, $Z$ would also admit a density at $t$ if we add the condition $\varphi^\prime(0)=\varphi^\prime(1)$. However, our Theorem~\ref{thm:bahadur} requires  another assumption. Namely, we need to assume that for any $t_n=\lambda c_n + \mathcal{O}(\sqrt{c_n/k_n})$, 
${f_Z(t_n)}/{f_Z(\lambda c_n)}$ tends to $1$. To this end, we would have to combine  Assumption (ii-2), with an assumption like $\inf_{t} \varphi^\prime(t)=\sup_t \varphi^\prime(t)$. This explains why we focused on the case $\varphi(t)=t$ in Section~\ref{sec:estimators} and in the presentation of our asymptotic results in Section~\ref{sec:asymp}.

\subsection{Rule of thumb under the Poisson case}

In this section, we want to examine the value of the true median of $Z$ under the Poisson case. Even if this is useless we also had a look at different functions $\varphi$. Figure~\ref{fig:medianPoisson} presents the true median of $\Pi$ and $Z=\Pi+\varphi^{-1}(U)$ where $\Pi$ follows a Poisson distribution with mean $\nu$ and where $U$ is a uniform random variable on $[0,1]$. We considered the cases $\varphi(t)=\sqrt{t},t,t^2$ and examined the true median minus $\nu$ in terms of $\nu$. First, we recover a result obtained by \cite{adell:jodra:05}: when $\nu$ is an integer, the median of $\Pi$ equals $\nu$ and for other values of $\nu$, it lies in the interval $[\nu-\log(2),\nu+1/3]$. It is worth   observing that the choice $\varphi(t)=t$ leads us to conjecture that when $\nu$ is large $\Me_{Z}$ is very close to $\nu+1/3$.

So, we could use the rule of thumb derived under the Poisson case and  modify the jittered estimator~\eqref{eq:lambdaJ} as follows
\begin{equation}
  \label{eq:modifiedEst}
  \widehat \lambda^{J,2} = \widehat \lambda^{J} -\frac1{3c_n}=\Me_{\check \bZ} -\frac1{3c_n}.
\end{equation}
Since $|W_n|^{1/2}/c_n=\sqrt{k_n/c_n}\to 0$ by Assumption~(i), this produces no differences asymptotically: $\widehat\lambda^{J,2}$ has the same behaviour as $\widehat\lambda^J$ and satisfies  the   central limit theorem given by~\eqref{eq:convEst} or Corollary~\ref{cor:lambdaEstCox}. We compared $\widehat\lambda^J$ and $\widehat\lambda^{J,2}$  in the framework of the simulation study presented in Section~\ref{sec:sim}. The evidence of better empirical results was unclear which explains why we did not present $\widehat\lambda^{J,2}$ before and kept $\widehat \lambda^J$ in the simulation study.

\begin{figure}[H]
\centering
\includegraphics[scale=.7]{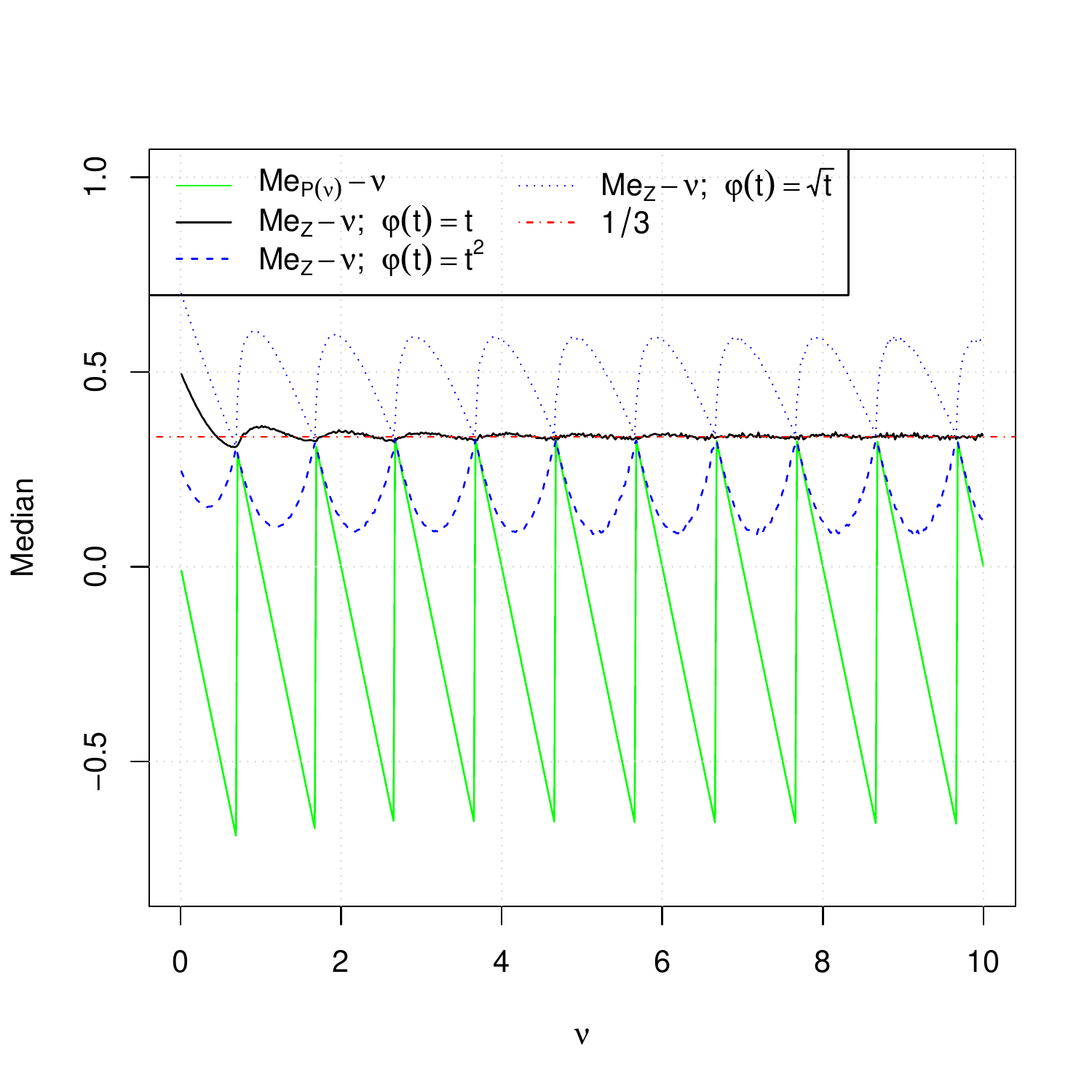}
\caption{\label{fig:medianPoisson} Sample medians based on $10^6$ replications of $\Pi$ or $\Pi+\varphi^{-1}(U)$ random variables where $\Pi$ (resp. $U$) follows a Poisson distribution with mean $\nu$ (resp. uniform distribution on [0,1]) and where $\varphi=(t)=t,t^2,\sqrt(t)$.}
\end{figure}

\bibliographystyle{plainnat}

\bibliography{intensity}

\end{document}